\newcommand{\mathsym}[1]{{}}
\newtheorem{thm}{Theorem}[section]
\newtheorem{cor}[thm]{Corollary}
\newtheorem{lem}[thm]{Lemma}
\theoremstyle{definition}
\newtheorem{defn}{Definition}[section]
\numberwithin{equation}{section}
\theoremstyle{remark}
\newtheorem{rem}{Remark}[section]
\theoremstyle{example}
\newcommand{\n}{\nabla}
\newcommand{\ld}{\lambda}
\newcommand{\no }{\nonumber }
\newcommand{\intm }{\int_{M} }
\newcommand{\dmu }{\mathrm{d}\mu }
\newcommand{\md }{\mathrm{d}}
\newcommand{\mS }{\mathcal{S}}
\newcommand{\dmut}{\mathrm{d}\mu(t)}
\newcommand{\dmuo}{\mathrm{d}\mu(0)}
\newcommand{\de}{\partial}
\newcommand{\ov}{\overline}
\newcommand{\fa}{ \mathcal{F}^{+}}
\newcommand{\be}{\begin{equation}}
\newcommand{\ee}{\end{equation}}
\newcommand{\ba}{\begin{eqnarray}}
\newcommand{\ea}{\end{eqnarray}}
\newcommand{\ban}{\begin{eqnarray*}}
\newcommand{\ean}{\end{eqnarray*}}
\newcommand{\bag}{\begin{aligned}}
\newcommand{\eag}{\end{aligned}}
\newcommand{\lf}{\left}
\newcommand{\rt}{\right}
\newcommand{\al}{\alpha}
\newcommand{\suml}{\sum\limits }
\newcommand{\W}{\mathcal{W} }
\newcommand{\bpf}{\begin{proof} }
\newcommand{\epf}{\end{proof} }
\newcommand{\X}{\mathfrak{X}(M)}
\newcommand{\tr}{\mathrm{Tr}}
\newcommand{\F}{\mathcal{F}}
\newcommand{\ti}{\tilde}
\newcommand{\mn}{\sqrt{-1}}
\newcommand{\dist}{\mathrm{dist}}
\newcommand{\R}{ \mathbb{R}}
\newcommand{\Z}{ \mathbb{Z}}
\begin{document}
\title{the (logarithmic) Sobolev inequalities along geometric flow and applications}
\author{Shouwen Fang$^*$, Tao Zheng$^{**}$ }
\subjclass[2010]{53C21, 53C44}
\keywords{geometric flow, twisted K\"{a}hler-Ricci flow, Lorentzian mean curvature flow, logarithmic Sobolev inequality, Sobolev inequality}
{\small\thanks{$^{*}$Supported by National Natural Science Foundation of China grant Nos.11401514 and 11371310, and the University Science Research
Project of Jiangsu Province grant No. 13KJB110029.}
\thanks{$^{**}$Supported by the fundamental fund of Beijing Institute of Technology Nos.20131742009
and 20141742002, China Postdoctoral Science Foundation funded project grant Nos.2014M550620 and 2015T80040, and National Natural Science Foundation of China grant Nos.11401023 and 11471180.}}

\maketitle
\begin{abstract}
For some class of geometric flows, we obtain the (logarithmic) Sobolev inequalities and their equivalence up to  different factors directly and also obtain the long time non-collapsing and non-inflated properties, which generalize the results in the case of Ricci flow or List-Ricci flow or harmonic-Ricci flow. As applications, for mean curvature flow in Lorentzian space with nonnegative sectional curvature and twisted K\"{a}hler-Ricci flow on Fano manifolds, we get the results above.
\end{abstract}
\section{Introduction}

The role played by Sobolev inequality in analysis and geometry is well known and a fair amount of work has been devoted to its study.
Let $(M,\,g)$ be an $n$-dimensional ($n\geq 3$) compact Riemannian manifold.
Aubin\cite{aubin} proved the following Sobolev inequality
$$
\lf(\intm |f|^{\frac{2n}{n-2}}\dmu\rt)^{\frac{n-2}{n}}\leq \al\intm |\n f|^2\dmu+\beta\intm f^2\dmu,\quad\forall\, f\in \,W^{1,2}(M),
$$
where
$$
\al=[K(n)]^2+\varepsilon,\;\varepsilon>0
$$
and $\beta$ depends on bounds on the injectivity radius, sectional curvature and its derivatives and
$K(n)$ is the best constant in the Sobolev inequality for $\R^n$ (see \cite{talenti}).  Hebey \cite{hebey} proved that $\beta$ can depend only on $\varepsilon$, the injective radius and the lower bound of the Ricci curvature. Hebey and Vaugon \cite{hv} proved that we can take $\varepsilon=0$ but $\beta$ still depends on the derivatives of curvature tensor.

Assume that $Ric\geq -Kg$, where $K$ is a nonnegative constant. We consider Sobolev inequality like
\ba\label{sobsharp}
\lf(\intm |f-f_M|^{\frac{2n}{n-2}}\dmu\rt)^{\frac{n-2}{n}}\leq S(M)\intm |\n f|^2\dmu,\quad \forall\;f\in C^{\infty}(M,\,\R).
\ea
Gallot \cite{gallot} proved
\be\label{sobxi}
S(M)\leq e^{C_n(1+\sqrt{K}\mathrm{diam}(M))}[\mathrm{diam}(M)]^2[\mathrm{Vol}_{g}(M)]^{-\frac{2}{n}}.
\ee
Apart form the dimensional constant, the estimate above is sharp.

Let $B:=B(x,\,r)\subset M$ be a ball with center $x$ and radius $r$. Then in view of (\ref{sobsharp}) and (\ref{sobxi}), it is natural to conjecture that
$$
\lf(\int_{B } |f-f_{B }|^{\frac{2n}{n-2}}\dmu\rt)^{\frac{n-2}{n}}\leq e^{C_n(1+\sqrt{K}r)} r^2[\mathrm{Vol}_{g}(B)]^{-\frac{2}{n}}
\int_B |\n f|^2\dmu,\quad \forall\;f\in C^{\infty}(B,\,\R),
$$
where $K$ is a nonnegative constant such that
$$
Ric\geq -Kg,\quad \mbox{on}\;B(x,\,2r).
$$
Saloff-Coste \cite{saloff} solved the conjecture partially. They proved that, for any $f\in C^{\infty}_{0}(B,\,\R)$, if $n\geq 3$, there holds
\ba
\lf(\int_B |f|^{\frac{2n}{n-2}}\dmu\rt)^{\frac{n-2}{n}}\leq e^{C_n(1+\sqrt{K}r)} r^2[\mathrm{Vol}_{g}(B)]^{-\frac{2}{n}}
\int_B \lf(|\n f|^2 +r^{-2} f^2\rt)\dmu\no
\ea
and if $n\leq 2$,  the above inequality holds with $n$  replaced by any fixed $n'>2$.
More details about Sobolev inequality can be found in Aubin and Li \cite{aubinli}, Biezuner \cite{biezuner} and the references therein.

In the case of Ricci flow
\ba\label{rf}
\frac{\de}{\de t}g_{ij}(x,t)=-2R_{ij}(x,t),
\ea
(logarithmic) Sobolev inequalities also play an important role in its analysis. One motivation for the $\W$-entropy comes from the log-Sobolev inequality of Gross \cite{gross}(see also Topping \cite{topping}). Due to the importance of (logarithmic) Sobolev inequality in the analysis of geometric flow, it is key to have a uniform control on the constants $\al$ and $\beta$.

\v{S}e\v{s}um and Tian \cite{sesumtian} proved a uniform Sobolev imbedding for certain K\"{a}hler-Ricci flow with Ricci curvature bounded from below.

However, in general the constant $\beta$ cannot be controlled uniformly along the Ricci flow. By making use of the (generalized) Perelman's $\W$ entropy \cite{perelman}, Zhang \cite{zhangqi1,zhangqi2} and  Ye \cite{ye2,ye4,ye3,ye1}(see also Hsu\cite{hsu}) proved (logarithmic) Sobolev inequalities along Ricci flow, from which and the method of \cite{carron} (see also Lemma 2.2 in \cite{hebey2} and its proof or Lemma 6.1 in \cite{ye1}) they established long time non-collapsing result generalizing the Perelman's short time result \cite{perelman}. Zhang \cite{zhangqi3} also proved the long time non-inflated result for the normalized K\"{a}hler-Ricci flow on Fano manifolds.

In this paper, we consider the geometric flow
\ba\label{grf}
\frac{\de}{\de t}g_{ij}(x,t)=-2\mS_{ij}(x,t)
\ea
on $M\times[0,\,T)$ for some (finite or infinite) $T>0$ with a given initial Riemannian metric $g(0)=g_{0}$, where $\mS_{ij}(x,t)$ is the component of a time-dependent symmetric $2$-tensor $\mS$.
Motivated by \cite{perelman}, we define the $\F$ functional and $\W$ functional and prove their monotonicity under some assumptions. Next we obtain the (logarithmic) Sobolev inequalities and their equivalence up to  different factors. We also prove the long time non-collapsing and non-inflated. As applications, for mean curvature flow in Lorentzian space and twisted K\"{a}hler-Ricci flow on Fano manifolds, we get the results above.

In the following, we denote
the volume element of $g(t)$ by $\dmut$, the trace of $\mS_{ij}(t)$ by $S_{t}\;\mbox{or}\;S(x,t)=\suml_{i,j=1}^n g^{ij}(t)\mS_{ij}(t)$ (sometimes also by $S$ simply without confusion),
the volume of $M$ with respect  to $g(t)$ by $\mathrm{Vol}_{g(t)}(M)$,
the first eigenvalue of $-\Delta_{g_{t}}+\frac{S_{t}}{4}$ by $\ld_{0}(g(t))$
and the norm of the gradient of $u\in W^{1,2}(M)$ with respect  to $g(t)$ by $|\n u|_{t}$.

For convenience, we define an evolving tensor quantity $\mathcal{D}_2$ associated to the tensor $\mS$ (see for example \cite{fangzhu} and the references therein).
\begin{defn}
Let $g(x,t)$ be a smooth solution to the geometric flow (\ref{grf}) on $M\times[0,T)$. Then for any $X\in \X$, we define
\ba\label{con}
\mathcal {D}_2( \mathcal{S},X)&:=&\frac{\partial S}{\partial
t}-\Delta_{g(t)} S-2|\mathcal{S}|_{g(t)}^2\no\\
&&+4(\nabla^i\mS_{ij})X^j-2X^i\nabla_iS+2R_{ij}X^iX^j-2\mathcal{S}_{ij}X^iX^j,
\ea
where $\n$ and $R_{ij}$ are the Levi-Civita connection and Ricci curvature respectively with respect  to the Riemannian metric $g(t)$.
In particular, if for any vector field $X\in \X$ there holds $\mathcal {D}_2( \mathcal{S},X)\geq0$ on $[0,\,T)$, then we call $\mathcal {D}_2( \mathcal{S},\cdot)$ nonnegative.
\end{defn}
\begin{thm}\label{thmlogsob1}
Assume that $g(x,t)$ is a smooth solution to the geometric flow (\ref{grf}) in $M\times[0,T)$ and that $\mathcal{D}_{2}(\mS,\cdot)$ defined in (\ref{con}) is nonnegative.
For each $\sigma>0$ and each $t\in[0,\,T)$, we have
\ba\label{logsob1}
\int_{M}u^2\ln u^2 \dmut\leq \sigma \int_{M}\lf(|\n u|_{t}^2+\frac{S_{t}}{4}u^2\rt)\dmut-\frac{n}{2}\ln \sigma
+A_1\lf(t+\frac{\sigma}{4}\rt)+A_2
\ea
  for any $u\in W^{1,2}(M)$ with  $\int_M u^2 \dmut=1$,
where
\ba
A_1&=&\frac{4}{  C_S(M, g_0)^2\mathrm{Vol}_{g_0}(M)^{\frac{2}{n}}}-\min S_{0}, \label{a1} \\
A_2&=& n\ln   C_S(M,g_0)+
\frac{n}{2}(\ln n-1), \label{a2}
\ea
and
where $ C_S(M,g_0)$ is the Sobolev constant defined in ( \ref{sob}).

Therefore, we can deduce
\begin{align} \label{stronglogsob1}
\int_M u^2 \ln u^2 \dmut \le \frac{n}{2} \ln \left[ \alpha_{\MakeUppercase{\romannumeral1}}\lf\{\int_M \lf(|\nabla u|_{t}^2 +\frac{S_{t}}{4} u^2\rt)\dmut+\frac{A_1}{4}\rt\} \right]
\end{align}
for any $u \in W^{1,2}(M)$ satisfying $\int_M u^2\dmut=1$,
where
$$
\alpha_{\MakeUppercase{\romannumeral1}}=\frac{2e}{n}e^{\frac{2(A_1t+A_2)}{n}}.
$$
\end{thm}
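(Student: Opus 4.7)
My plan is a two-stage argument. First, establish (\ref{logsob1}) via a Perelman-type $\mathcal{W}$-entropy monotonicity that reduces the inequality at time $t$ to one at $t = 0$ with the shifted parameter $\sigma_0 = \sigma + 4t$. Second, deduce (\ref{stronglogsob1}) by a one-variable optimization in $\sigma > 0$.

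For stage one at $t = 0$, I start with the Sobolev inequality (\ref{sob}) of constant $C_S(M, g_0)$ applied to $u$ with $\int u^2 d\mu_0 = 1$. Apply Jensen's inequality against the probability measure $u^2 d\mu_0$ (using the relation $p/(p-1) = n/2$ when $p = n/(n-2)$) to get $\int u^2 \ln u^2 \, d\mu_0 \leq (n/2)\ln\bigl(A\|\nabla u\|_0^2 + B\bigr)$, where $A,B$ are the constants in (\ref{sob}). Linearize via $\ln x \leq x/a + \ln a - 1$ with $a$ chosen so that the coefficient of $\|\nabla u\|_0^2$ becomes $\sigma_0$; then absorb $\int (S_0/4) u^2 d\mu_0$ using $S_0 \geq \min S_0$. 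This yields (\ref{logsob1}) at $t = 0$, with $\sigma_0$ in place of $\sigma$ and the stated $A_1, A_2$.

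To extend to $t > 0$, use the monotonicity of the $\mathcal{W}$-functional along (\ref{grf}), established earlier in the paper under $\mathcal{D}_2(\mathcal{S}, \cdot) \geq 0$. Given $u$ at time $t$ with $\sigma = 4\tau$, set $f := -\ln u^2 - (n/2)\ln(4\pi\tau)$ so that $u^2 = (4\pi\tau)^{-n/2} e^{-f}$, and solve the appropriate backward conjugate heat equation (with $d\tau/ds = -1$) down to $s = 0$, where $\tau(0) = \sigma/4 + t$. Monotonicity then gives $\mathcal{W}(g(t), f(t), \sigma/4) \geq \mathcal{W}(g(0), f(0), \sigma/4 + t)$, and the right-hand side is bounded below by the time-$0$ logarithmic Sobolev inequality obtained above with parameter $\sigma_0 = \sigma + 4t$. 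Translating the $\mathcal{W}$-bound back via the same change of variables recovers (\ref{logsob1}) with the time-shifted factor $A_1(t + \sigma/4)$.

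Finally, for (\ref{stronglogsob1}) put $X := \int_M(|\nabla u|_t^2 + (S_t/4) u^2) d\mu_t + A_1/4$, so (\ref{logsob1}) reads $\int u^2 \ln u^2 \, d\mu_t \leq \sigma X - (n/2)\ln \sigma + (A_1 t + A_2)$. The right-hand side is minimized at $\sigma_\ast = n/(2X)$, giving value $(n/2)[1 + \ln(2X/n)] + (A_1 t + A_2) = (n/2)\ln(\alpha_{\MakeUppercase{\romannumeral1}} X)$, which is precisely (\ref{stronglogsob1}); positivity of $X$ is automatic, since $X \leq 0$ would force the $\sigma \to \infty$ limit of (\ref{logsob1}) to be $-\infty$. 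The main obstacle will be the bookkeeping in stage one: matching the Sobolev-to-logarithmic-Sobolev conversion constants exactly to the stated $A_1, A_2$ and correctly pairing the $\tau$-shift in the $\mathcal{W}$-monotonicity with the $\sigma$-shift between times; the final optimization is then routine.
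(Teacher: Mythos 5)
Your proposal is correct and takes essentially the same route as the paper: the paper invokes the static logarithmic Sobolev inequality (\ref{logsobr1}) on $(M,g_0)$ (which you re-derive from (\ref{sob}) via Jensen and linearization), propagates it forward via the $\mathcal{W}$-entropy monotonicity of Lemma \ref{dandiao} (packaged there as the inequality (\ref{mu-monotone}) for $\mu^*$), and then applies the elementary optimization of Lemma \ref{stronglemma} to pass from (\ref{logsob1}) to (\ref{stronglogsob1}). Your contradiction argument for the positivity of $X$ is a small but valid variant of the paper's tacit justification (which would instead use $\int \frac{S_t}{4}u^2 \geq \frac{\min S_0}{4}$ from Lemma \ref{sjie} together with the definition of $A_1$).
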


\begin{thm}\label{thmlogsob3}
Assume that $g(x,t)$ is a smooth solution to the geometric flow (\ref{grf}) in $M\times[0,T)$ and that $\mathcal{D}_{2}(\mS,\cdot)$ defined in (\ref{con}) is nonnegative. If $\ld_{0}(g_0)$ is positive, then for each $t\in [0, T)$ and each $\sigma>0$ there holds
\begin{align} \label{logsob3}
\int_M u^2 \ln u^2 \dmut \le \sigma \int_M \lf(|\nabla u|_{t}^2 +\frac{S_{t}}{4}u^2\rt) \dmut
-\frac{n}{2}\ln \sigma +C
\end{align}
for all $u \in W^{1,2}(M)$ with $\int_M u^2 \dmut=1$, where $C$ depends only on the dimension $n$, $\mathrm{Vol}_{g_0}(M)$, $C_S(M, g_0)$, $\lambda_0(g_0)$ and the lower bound for $S_{0}$.

Therefore, there holds for each $t \in [0, T)$
\begin{align} \label{stronglogsob3}
\int_M u^2 \ln u^2 \dmut \le \frac{n}{2} \ln \left[ \alpha_{\MakeUppercase{\romannumeral2}}\int_M \lf(|\nabla u|_t^2 +\frac{S_t}{4} u^2\rt)\dmut \right]
\end{align}
for all $u \in W^{1,2}(M)$ with $\int_M u^2 \dmut=1$,
where
$$
\alpha_{\MakeUppercase{\romannumeral2}}=\frac{2e}{n}e^{\frac{2 C}{n}}.
$$
\end{thm}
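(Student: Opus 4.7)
\textbf{Proof plan for Theorem \ref{thmlogsob3}.} The idea is to reduce to the two theorems already proved, splitting the region in the $(t,\sigma)$-plane at the threshold $\tau_0:=\frac{n}{8}C_S(M,g_0)^2\delta_0$ that separates their hypotheses.

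First I would treat the ``large'' region where $t+\sigma\ge \tau_0$. Since the positivity hypothesis $\lambda_0(g_0)>0$ is exactly what Theorem \ref{thmlogsob2} requires, inequality \eqref{sobolev2} applies directly and gives \eqref{logsob3} with the explicit constant
\[
C_{\MakeUppercase{\romannumeral2}}:=\frac{n}{2}\ln n+n\ln C_S(M,g_0)+\sigma_0(g_0),
\]
which depends only on $n$, $C_S(M,g_0)$, and $\sigma_0(g_0)$ (the latter in turn depending only on the quantities listed in the theorem).

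Next, in the ``small'' region $t+\sigma<\tau_0$, both $t$ and $\sigma$ are bounded above by $\tau_0$, so I would apply Theorem \ref{thmlogsob1}. The only dangerous term on the right-hand side of \eqref{logsob1} is $A_1(t+\sigma/4)$, but on this region it is controlled by
\[
A_1\lf(t+\tfrac{\sigma}{4}\rt)\le A_1\,\tau_0,
\]
yielding \eqref{logsob3} with constant
\[
C_{\MakeUppercase{\romannumeral1}}:=A_1\,\tau_0+A_2.
\]
Setting $C:=\max\{C_{\MakeUppercase{\romannumeral1}},\,C_{\MakeUppercase{\romannumeral2}}\}$ gives the unified inequality \eqref{logsob3}. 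By inspection, $C$ depends only on $n$, $\mathrm{Vol}_{g_0}(M)$, $C_S(M,g_0)$, $\lambda_0(g_0)$, and a lower bound of $S_0$, as claimed.

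Finally, I would pass from \eqref{logsob3} to \eqref{stronglogsob3} by the usual optimization in $\sigma$: for fixed $u$ let $X:=\int_M(|\nabla u|_t^2+\tfrac{S_t}{4}u^2)\dmut$ and minimize $\sigma X-\tfrac{n}{2}\ln\sigma$ over $\sigma>0$. The minimizer is $\sigma=\tfrac{n}{2X}$ and the minimum value rearranges to $\tfrac{n}{2}\ln\bigl(\tfrac{2e}{n}X\bigr)$, so \eqref{logsob3} becomes
\[
\int_M u^2\ln u^2\dmut\le \tfrac{n}{2}\ln\!\Big[\tfrac{2e}{n}e^{2C/n}\,X\Big],
\]
which is precisely \eqref{stronglogsob3} with $\alpha_{\MakeUppercase{\romannumeral3}}=\tfrac{2e}{n}e^{2C/n}$. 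The main subtlety, though routine, is verifying that the case split covers every admissible pair $(t,\sigma)$ and that the small-region bound on $A_1(t+\sigma/4)$ does not require any additional assumption beyond those already in force; no new estimate is needed beyond the two preceding theorems.
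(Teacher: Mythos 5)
Your proposal is correct and follows the same two-step structure as the paper (case split between Theorems \ref{thmlogsob1} and \ref{thmlogsob2}, then optimize in $\sigma$ via Lemma \ref{stronglemma}), but your case split is actually the more careful one. The paper literally splits on $\sigma$ alone --- ``if $\sigma < \frac{n}{8}C_S(M,g_0)^2\delta_0(g_0)$, apply Theorem \ref{thmlogsob1}, otherwise Theorem \ref{thmlogsob2}'' --- which, taken at face value, leaves the term $A_1(t+\sigma/4)$ in (\ref{logsob1}) unbounded in $t$ in the small-$\sigma$ regime whenever $A_1>0$. Your split on $t+\sigma$ at the same threshold $\tau_0$ resolves this: in the ``small'' region both $t$ and $\sigma$ are $\le \tau_0$, so $A_1(t+\sigma/4)\le A_1\tau_0$ is $t$-independent, and in the ``large'' region the hypothesis of Theorem \ref{thmlogsob2} is satisfied and its constant is already $t$-independent. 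This is precisely the subtlety you flag at the end, and you handle it correctly. The one point you should state explicitly, as the paper does, is that the quantity $X:=\int_M(|\nabla u|_t^2+\tfrac{S_t}{4}u^2)\dmut$ appearing in the optimization is strictly positive (so that the minimizer $\sigma = n/(2X)$ is admissible and Lemma \ref{stronglemma} applies with $a=X>0$): this follows because $\lambda_0(g(t))$ is non-decreasing by Lemma \ref{dandiao} and $\lambda_0(g_0)>0$ by hypothesis, so $X\ge \lambda_0(g(t))\int_M u^2\dmut = \lambda_0(g(t))>0$. With that observation added, your argument is complete.
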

\begin{thm}\label{thmsob}
Assume that $g(x,t)$ is a smooth solution to the geometric flow (\ref{grf}) in $M\times[0,T)$ and that $\mathcal{D}_{2}(\mS,\cdot)$ defined in (\ref{con}) is nonnegative. There hold
\begin{enumerate}
\item if $\ld_{0}(g_{0})>0$, for $t\in [0,\,T)$ and $u\in W^{1,2}(M)$, there holds
\begin{align}\label{sobp}
\lf(\intm |u|^{\frac{2n}{n-2}}\dmut\rt)^{\frac{n-2}{n}}\leq A\intm\lf( |\n u |_{t}^2+\frac{S_t}{4}u^2\rt)\dmut,
\end{align}
where $A$ is a positive defined in (\ref{sobpc}), depending only on $n,\,\mathrm{Vol}_{g_0}(M),\,C_{S}(M,\,g_0),\,\ld_0(g_0)$ and the lower bound of $S_0$.
\item if $T<\infty$, for $t\in [0,\,T)$ and $u\in W^{1,2}(M)$, there holds
\begin{align}\label{sobi}
\lf(\intm |u|^{\frac{2n}{n-2}}\dmut\rt)^{\frac{n-2}{n}}\leq A\intm\lf( |\n u |_{t}^2+\frac{S_t}{4}u^2\rt)\dmut+
B\intm u^2\dmut,
\end{align}
where $A$ and $B$ are defined in (\ref{sobia}) and (\ref{sobib}) respectively, depending only on  $n$, $\mathrm{Vol}_{g_0}(M)$, $C_{S}(M,\,g_0)$, $\ld_0(g_0)$ and the upper bound   $T$.
\end{enumerate}
\end{thm}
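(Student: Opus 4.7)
The plan is to derive both parts of Theorem~\ref{thmsob} from the strong logarithmic Sobolev inequalities (\ref{stronglogsob1}) and (\ref{stronglogsob3}) via the classical Davies--Varopoulos chain of implications: a log-Sobolev inequality for a Schr\"odinger operator implies ultracontractivity of its heat semigroup, which in turn is equivalent to the companion Sobolev inequality for that operator.

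Fix $t\in[0,T)$ and freeze the metric at $g(t)$. Let $L:=-\Delta_{g(t)}+\tfrac{S_t}{4}$ on $L^2(M,\dmut)$, which is self-adjoint, and let $P_\tau:=e^{-\tau L}$ be the associated heat semigroup in an auxiliary variable $\tau>0$. For part (1) I would apply Theorem~\ref{thmlogsob3} directly; for part (2) I would first rewrite $A_1(t+\sigma/4)$ in (\ref{logsob1}) as $A_1 t+\sigma\cdot\tfrac{A_1}{4}$ and absorb the $\sigma$-factor by shifting the potential from $\tfrac{S_t}{4}$ to $\tfrac{S_t}{4}+\tfrac{A_1}{4}$, after which the remaining additive $A_1 t+A_2$ is bounded by $A_1T+A_2$ since $T<\infty$. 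In both cases the resulting inequality has the unified form
\[
\int_M v^2\ln v^2\,\dmut\;\le\;\sigma\,\langle Lv,v\rangle_{L^2}-\tfrac{n}{2}\ln\sigma+C_0,\qquad\|v\|_{L^2(M,\dmut)}=1,
\]
with $C_0$ depending only on the data listed in the statement.

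Next I would run Davies' semigroup argument: set $\phi(\tau):=\|P_\tau u\|_{q(\tau)}$ for an appropriate increasing exponent $q(\tau)$ with $q(0)=2$, and compute $\tfrac{d}{d\tau}\ln\phi$. The displayed log-Sobolev inequality, applied with $\sigma$ calibrated to $q'(\tau)$, forces $\phi$ to be non-increasing; choosing $q$ to blow up at $\tau=1$ yields the ultracontractive bound $\|P_\tau\|_{2\to\infty}\le K\tau^{-n/4}$ for $0<\tau\le 1$. Self-adjointness gives $\|P_\tau\|_{1\to 2}=\|P_\tau\|_{2\to\infty}$, and composition produces the on-diagonal heat-kernel bound $\|P_\tau\|_{1\to\infty}\le K'\tau^{-n/2}$. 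By Varopoulos' equivalence this is the same thing as the Sobolev inequality
\[
\Big(\int_M|u|^{\frac{2n}{n-2}}\dmut\Big)^{\frac{n-2}{n}}\;\le\; A\,\langle Lu,u\rangle_{L^2},
\]
with $A$ a dimensional multiple of $K^2$. For part (1) this is exactly (\ref{sobp}); for part (2) unfolding the potential shift produces the additional term $B\int_M u^2\dmut$ in (\ref{sobi}) with $B$ proportional to $A\cdot\tfrac{A_1}{4}$.

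The main obstacle is the book-keeping rather than any single deep estimate. Although the Davies--Varopoulos machinery is classical on a fixed manifold, one must verify that it applies to the Schr\"odinger operator $L$ whose potential $S_t/4$ is only bounded below---positivity of $L$ in part (1) is secured by $\lambda_0(g_0)>0$, while in part (2) one relies on the propagation of a lower bound for $S_0$ along the flow---and one must carefully track the dependence of the ultracontractive constant $K$ on $C_0$ so that the final constants $A$ (respectively $A$ and $B$) depend exactly on $n$, $\mathrm{Vol}_{g_0}(M)$, $C_S(M,g_0)$, $\lambda_0(g_0)$ and the lower bound of $S_0$, with the additional parameter $T$ entering only in part (2).
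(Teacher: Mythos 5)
Your proposal is mathematically sound, but it takes a genuinely different route from the paper's. The paper does \emph{not} go through heat semigroups at all: it uses the Bakry--Coulhon--Ledoux--Saloff-Coste truncation technique developed in Section~2. Concretely, after normalizing $f = u/\|u\|_2$ and invoking (\ref{stronglogsob2}) (resp.\ (\ref{stronglogsob1})), the authors set $W(f)^2 = \int_M [|\nabla u|_t^2 + (\tfrac{S_t}{4}+C)u^2]\,\dmut$ (resp.\ with the shift $\tfrac{S_t+A_1}{4}$), then apply Lemma~\ref{lem6} and Lemma~\ref{lem7} with $\rho=2,\,p=2,\,s=1,\,q=\tfrac{2n}{n-2}$. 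These lemmas slice $f$ into level-set pieces $f_{\rho,k}$, sum, and pass directly from the log-Sobolev inequality to the Sobolev inequality at the level of the measure, with no semigroup or spectral theory. The hypotheses $\lambda_0(g_0)>0$ (part~1) and the propagated lower bound $S_t \geq \min S_0$ via Lemma~\ref{sjie} (part~2) enter precisely to guarantee the positivity $c+S\geq 0$ required by Lemma~\ref{lem7} and to absorb the $\|u\|_2^2$ term in part~(1). By contrast, your Davies--Varopoulos chain (log-Sobolev $\Rightarrow$ ultracontractivity $\Rightarrow$ on-diagonal heat kernel bound $\Rightarrow$ Sobolev) is the route the paper explicitly attributes, in the remark after Theorem~\ref{thmsob}, to Ye and Zhang in the Ricci-flow case, and deliberately avoids here; it requires in addition the large-$\tau$ decay of $\|P_\tau\|_{2\to 2}$, which you would secure from $\lambda_0(g(t))\geq\lambda_0(g_0)>0$ (part~1) or from the uniform positivity of $\tfrac{S_t+A_1}{4}\geq \tfrac{1}{C_S^2\mathrm{Vol}_{g_0}(M)^{2/n}}$ (part~2) -- a step you gesture at but do not carry out. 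The truncation route has the advantage of being elementary and of producing the explicit constants (\ref{sobpc}), (\ref{sobia}), (\ref{sobib}) stated in the theorem, which the semigroup route would not reproduce exactly, though it yields constants with the same qualitative dependence.
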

\begin{rem}
From the Jensen's inequality, we can deduce logarithmic Sobolev inequality from Sobolev inequality(see for example). Now from the proof of Theorem \ref{thmsob}, we know that logarithmic Sobolev inequality implies also Sobolev inequality with  different  constants. Therefore, we can say that (logarithmic) Sobolev inequalities are equivalent to each other up to constant factors. In the case of Ricci flow (\ref{rf}), the equivalence proved by making use of estimates on heat kernel can be found in Ye \cite{ye1} and Zhang \cite{zhangqi2}.
\end{rem}
\begin{rem}
In the case of Ricci flow (\ref{rf}),  the results in Theorem \ref{thmlogsob1},
Theorem \ref{thmlogsob3} and Theorem \ref{thmsob} can be found in Zhang \cite{zhangqi1,zhangqi2}, Ye \cite{ye1} and Hsu \cite{hsu}.
\end{rem}
\begin{rem}
In the case of extended Ricci flow (so-called List-Ricci flow \cite{list})
$$
\lf\{
\bag
\frac{\de}{\de t}g_{ij}(x,t)=&-2R_{ij}(x,t)+4\md \phi(x,\,t)\otimes \md \phi(x,\,t),\\
\frac{\de}{\de t}\phi(x,\,t)=&\Delta_{g(x,\,t)}\phi(x,\,t),
\eag
\rt.
$$
where $\phi\in C^{\infty}(M\times\R,\,\R)$, the Sobolev inequalities were obtained by Liu and Wang \cite{liuwang}.
\end{rem}
\begin{rem}
In the case of harmonic-Ricci flow (see \cite{MH,MR,Zh})
$$
\lf\{
\bag
\frac{\de}{\de t}g_{ij}(x,t)=&-2R_{ij}(x,t)+2\alpha(t) \nabla\psi\otimes \nabla\psi ,\\
\frac{\de}{\de t} \psi(x,t)=&\tau_{g(x,t)}\psi(x,t),
\eag
\rt.
$$
where $\psi(\cdot,t):(M,g(\cdot,t))\rightarrow(N,h)$ is a family of smooth maps between two Riemannian manifolds, both $g(\cdot,t)$ and $h$ are Riemannian metrics, $\alpha(t)$ is a positive non-increasing function, and $\tau_{g}\psi$ denotes the intrinsic Laplacian of $\psi$,  the Sobolev inequalities can be found in \cite{fangzheng}.
\end{rem}
Given the (logarithmic) Sobolev inequalities, we can prove the $\kappa$-noncollapsing property and the so-called $\kappa$-noninflated property and also give
some examples as applications.

The rest of the paper is organized as follows.
In Section \ref{logsobrm}, we prove the equivalence between Sobolev inequality and logarithmic Sobolev inequality up to a different factor, which also holds in the case of geometric flow (\ref{grf}). In Section \ref{pre}, we define $\F$ functional and $\W$ entropy and prove their monotonicity and   prove the lower bound of $S$, assuming that $\mathcal{D}_2(\mS,\cdot)$ is nonnegative. In Section \ref{logsob}, we prove (logarithmic) Sobolev inequalities under geometric flow (\ref{grf}), i.e., Theorem \ref{thmlogsob1}, Theorem \ref{thmlogsob3} and Theorem \ref{thmsob}.
In Section \ref{noncollapsing}, we give the $\kappa$-noncollapsing property along geometric flow (\ref{grf}). In Section \ref{sectionnoninflated}, based on a series of properties of fundamental solution to conjugate heat equation along geometric flow (\ref{grf}), we prove the so-called $\kappa$ noninflated property. In Section \ref{app}, as applications, we consider  Lorentzian mean curvature flow (\ref{mcf}) on ambient Lorentzian manifold with nonnegative sectional curvature and twisted K\"{a}hler-Ricci flow (\ref{tkrf}) on Fano manifolds and obtain the results mentioned in the first six sections along these two geometric flows.
\section{The (Logarithmic) Sobolev Inequalities on Riemannian Manifolds and their relations}\label{logsobrm}
In this section, first we give some (logarithmic) Sobolev inequalities and lemmas which will be useful in the following sections.

Let $(M,\,g)$ be an $n$-dimensional ($n\geq 3$) compact Riemannian manifold.
Then the Sobolev constant of $(M,\,g)$ (for the exponent 2) is defined to be
\be\label{sob}
C_{S}(M,\,g)=\sup\lf\{\|u\|_{\frac{2n}{n-2}}-\frac{1}{\mathrm{Vol}_{g}(M)^{\frac{1}{n}}}\|u\|_{2}:\quad u\in C^{1}(M),\,\|\n u\|_{2}=1\rt\}.
\ee
Therefore, the Sobolev inequality (for the exponent 2) is
\be\label{sobolevine}
\|u\|_{\frac{2n}{n-2}}\leq C_{S}(M,\,g)\|\n u\|_{2}+\frac{1}{\mathrm{Vol}_{g}(M)^{\frac{1}{n}}}\|u\|_{2},\quad \forall\;u\in W^{1,2}(M).
\ee
We need the following fundamental results (see for example \cite {ye1}).
\begin{thm}
Let $(M,\,g)$ be an $n$-dimensional ($n\geq 3$) compact Riemannian manifold and $\mS$ be any symmetric $2$-tensor with trace $S=\suml_{i,j=1}^ng^{ij}\mS_{ij}$. Then for any $u\in W^{1,2}(M)$ with $\|u\|_2=1$, there hold
\ba
\int_{M}u^2\ln u^2\dmu&\leq& n\ln\lf(C_{S}(M,\,g)\|\n u\|_{2}+\frac{1}{\mathrm{Vol}_{g} (M)^{\frac{1}{n}}}\rt),\\
\int_M u^2 \ln u^2\dmu&\leq& \frac{n\alpha C_S(M,g)^2}{2} \int_M \lf(|\nabla u|^2+\frac{S}{4}u^2\rt) \dmu
-\frac{n}{2}\lf(\ln \alpha-\ln 2+1\rt) \nonumber \\
& & +\frac{n\alpha}{2}\lf(\frac{1}{\mathrm{Vol}_{g} (M)^{\frac{2}{n}}} -\frac{\min S^-}{4}C_S(M,g)^2\rt),\label{logsobr1}
\ea
where $\al$ is any positive real number and $S^-=\min\{S,\,0\}$.

Moreover, if the first eigenvalue $\ld_0 =\lambda_0(g)$ of the operator
$-\Delta_{g}+\frac{S}{4}$ is positive, we can deduce
\ba \label{RLS4}
\int_M u^2\ln u^2\dmu \le  \frac{nAC_S(M,\,g)^2}{2}\int_M\lf(|\nabla u|^2+\frac{S}{4}u^2\rt)
-\frac{n}{2}\ln A+\frac{n}{2}\ln 2+\sigma_0,
\ea
where

\ba \label{delta-0}
\delta_0=\delta_0(g)=
\lf(\lambda_0 C_S(M,\,g)^2+\frac{1}{ \mathrm{Vol}_{g}  (M)^{\frac{2}{n}}}
-C_S(M,\,g)^2 \frac{\min S^-}{4}\rt)^{-1},
\ea
\ba \label{sigma-0}
\sigma_0=\sigma_0(g)&= &\frac{n}{2}\Bigg[\ln \lf(\lambda_0 C_S(M,\,g)^2+\frac{1}{ \mathrm{Vol}_g(M)^{\frac{2}{n}}}
-C_S(M,\,g)^2 \frac{\min S^-}{4}\rt)\nonumber \\
&&  \quad\quad-\ln (\lambda_0 C_S(M,\,g)^2)-1\Bigg]
\ea
and $A$ is any positive real number satisfying $A \ge \delta_0$.
\end{thm}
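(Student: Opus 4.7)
The plan is to establish the three inequalities sequentially, each a straightforward consequence of the previous together with an elementary analytic device. For the first ``entropy from Sobolev'' estimate, I would treat $u^2\dmu$ as a probability measure on $M$ (since $\|u\|_2=1$) and apply Jensen's inequality to the concave logarithm:
\[
\int_M \ln\bigl(u^{\frac{4}{n-2}}\bigr)\, u^2 \dmu \;\le\; \ln \int_M u^{\frac{2n}{n-2}} \dmu.
\]
This yields $\int_M u^2 \ln u^2 \dmu \le n \ln \|u\|_{2n/(n-2)}$, and substituting the Sobolev inequality (\ref{sobolevine}) closes the estimate.

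For (\ref{logsobr1}), the idea is to linearize the right-hand side of the first inequality via the elementary bound $\ln z \le z - 1$. I would apply it to $z = \alpha X^2 / 2$ with $X := C_S(M,g)\|\nabla u\|_2 + \mathrm{Vol}_g(M)^{-1/n}$, giving $2\ln X \le \alpha X^2/2 + \ln(2/\alpha) - 1$. Combined with $(a+b)^2 \le 2(a^2+b^2)$ applied to $X^2$, and the identity
\[
\|\nabla u\|_2^2 = \int_M \Bigl(|\nabla u|^2 + \tfrac{S}{4}u^2\Bigr) \dmu - \int_M \tfrac{S}{4}u^2 \dmu \;\le\; \int_M \Bigl(|\nabla u|^2+\tfrac{S}{4}u^2\Bigr) \dmu - \tfrac{\min S^-}{4}
\]
(valid because $\|u\|_2=1$ and $S \ge S^-$), the form (\ref{logsobr1}) emerges after multiplying by $n/2$ and collecting constants.

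For (\ref{RLS4}), write $E(u) := \int_M \bigl(|\nabla u|^2+\tfrac{S}{4}u^2\bigr) \dmu$, which satisfies $E(u)\ge\lambda_0>0$ by the Rayleigh quotient characterization of $\lambda_0$. The extra additive term $\tfrac{n\alpha}{2}\bigl(\mathrm{Vol}_g(M)^{-2/n}-\tfrac{\min S^-}{4}C_S(M,g)^2\bigr) = \tfrac{n\alpha}{2}\bigl(\delta_0^{-1}-\lambda_0 C_S(M,g)^2\bigr)$ appearing in (\ref{logsobr1}) is nonnegative, so it can be bounded by $\tfrac{n\alpha}{2\lambda_0}\bigl(\delta_0^{-1}-\lambda_0 C_S(M,g)^2\bigr) E(u)$ via $1 \le E(u)/\lambda_0$. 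Merging this with $\tfrac{n\alpha C_S(M,g)^2}{2}E(u)$ produces the coefficient $\tfrac{n\alpha}{2\lambda_0\delta_0}E(u)$, and the reparametrization $\alpha = A\,C_S(M,g)^2\,\lambda_0\,\delta_0$ converts it into $\tfrac{nAC_S(M,g)^2}{2}E(u)$. A direct computation using the definitions of $\delta_0$ and $\sigma_0$ then shows the remaining additive constant equals $-\tfrac{n}{2}\ln A + \tfrac{n}{2}\ln 2 + \sigma_0$, matching (\ref{RLS4}).

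The only non-routine part is the arithmetic bookkeeping: one must verify that the reparametrization produces precisely the $\sigma_0$ defined in (\ref{sigma-0}), which requires careful manipulation of the logarithms of $\lambda_0$, $C_S(M,g)$, and $\delta_0^{-1}$, and one must also confirm that the constraint $A \ge \delta_0$ corresponds to the range of $\alpha$ for which all the intermediate bounds are valid. All the underlying inequalities---Jensen, $\ln y \le y - 1$, the $(a+b)^2 \le 2(a^2+b^2)$ bound, and the Rayleigh quotient---are entirely elementary, so no further geometric input is required.
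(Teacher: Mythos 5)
Your proof is correct and is, in essence, the standard derivation that the paper implicitly appeals to when it cites Ye for this result (the paper itself does not reproduce a proof). All three steps check out: Jensen's inequality applied to the probability measure $u^2\,\mathrm{d}\mu$ with the concave function $\ln$ gives $\int u^2\ln u^2\,\mathrm{d}\mu\le n\ln\|u\|_{2n/(n-2)}$, and substituting the Sobolev inequality yields the first estimate; the tangent-line bound $\ln z\le z-1$ applied to $z=\alpha X^2/2$, together with $(a+b)^2\le 2(a^2+b^2)$ and $\int\tfrac{S}{4}u^2\,\mathrm{d}\mu\ge\tfrac{\min S^-}{4}$, reproduces (\ref{logsobr1}) exactly; and the reparametrization $\alpha=A\,C_S(M,g)^2\lambda_0\delta_0$, after absorbing the nonnegative constant $\tfrac{n\alpha}{2}\bigl(\delta_0^{-1}-\lambda_0 C_S^2\bigr)$ into the $E(u)$-term via $1\le E(u)/\lambda_0$, produces a coefficient $\tfrac{n\alpha}{2\lambda_0\delta_0}=\tfrac{nAC_S^2}{2}$ and an additive constant that collapses, after expanding $\ln\alpha$, precisely to $-\tfrac{n}{2}\ln A+\tfrac{n}{2}\ln 2+\sigma_0$ with $\sigma_0$ as in (\ref{sigma-0}).

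One remark on the point you flagged but did not settle: you wrote that one must ``confirm that the constraint $A\ge\delta_0$ corresponds to the range of $\alpha$ for which all the intermediate bounds are valid.'' In fact your argument requires no such constraint. The inequality (\ref{logsobr1}) holds for every $\alpha>0$, the absorption step $1\le E(u)/\lambda_0$ needs only $\lambda_0>0$, and the substitution $\alpha=AC_S^2\lambda_0\delta_0$ produces a legitimate $\alpha>0$ for every $A>0$. So your derivation actually proves (\ref{RLS4}) for all $A>0$, which is strictly stronger than the stated claim; the restriction $A\ge\delta_0$ in the theorem is inherited from how the inequality is applied downstream (in Theorem \ref{thmlogsob2}, $A=\tfrac{8(t+\sigma)}{nC_S^2}$ and the hypothesis $t+\sigma\ge\tfrac{n}{8}C_S^2\delta_0$ forces $A\ge\delta_0$), not from any obstruction in the proof itself. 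You should therefore state this plainly rather than leave it as an open bookkeeping item.

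Finally, a small technical caveat worth making explicit: the Jensen step uses $\ln(u^{p-2})$ with $p=\tfrac{2n}{n-2}$, so one should note the usual convention $0\ln 0=0$ on the set $\{u=0\}$, and that $u\in W^{1,2}(M)\subset L^{2n/(n-2)}(M)$ guarantees the relevant integrals are finite, so Jensen applies in the extended sense. This is standard but should be said.
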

Now we give some fundamental materials which will be useful in the proof of Logarithmic Sobolev inequality implying Sobolev inequality.
The ideas come from \cite{bakry} and the references therein.

Let $(M,\mathcal{E},\mu)$ be a measurable space with a nonnegative $\sigma$-finite measure $\mu$.
For convenience, let $\fa$ be nonnegative function on $M$ and be contained in all $L^p$-space with respect  to the measure $\mu$.

Let $W(f)$ be a given norm or semi-norm on $\fa$ which will be determined later. For $\rho>1,\,k\in \Z$, define
$$
f_{\rho,\,k}=\min\{(f-\rho^{k})^+,\,\rho^{k}(\rho-1)\},
$$
where $(f-\rho^{k})^+=\max\{f-\rho^{k},\,0\}$.

For any $f\in \fa$, define
$$
a_{f,p,k,\rho}=\rho^{pk}\mu(f\geq \rho^{k}).
$$
\begin{lem}
For any $f\in\fa$ and any $\rho>1$, we have
\ba\label{fpguanxi}
\frac{\rho^{p}-1}{\rho^{p}}\sum\limits_{k\in\Z}a_{f,p,k,\rho}\leq\|f\|_{p}^p\leq (\rho^{p}-1)\sum\limits_{k\in\Z}a_{f,p,k,\rho}.
\ea
\end{lem}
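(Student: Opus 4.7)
The plan is to use a dyadic (geometric) layer-cake decomposition of $M$ based on the values of $f$. Specifically, I would partition $M$ into the level sets $E_k=\{x\in M:\rho^{k}\leq f(x)<\rho^{k+1}\}$ for $k\in\Z$, which are disjoint and (since $f\geq 0$ and $f\in L^p$) cover $\{f>0\}$ up to a $\mu$-null set. On $E_k$, the trivial pointwise estimate $\rho^{pk}\leq f^p<\rho^{p(k+1)}$ gives
\[
\sum_{k\in\Z}\rho^{pk}\mu(E_k)\;\leq\;\|f\|_p^{p}\;\leq\;\rho^{p}\sum_{k\in\Z}\rho^{pk}\mu(E_k).
\]

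The next step is to rewrite $\sum_{k}\rho^{pk}\mu(E_k)$ in terms of the quantities $a_{f,p,k,\rho}=\rho^{pk}\mu(f\geq \rho^{k})$. Using $\mu(E_k)=\mu(f\geq \rho^{k})-\mu(f\geq \rho^{k+1})$ and performing an index shift $k\mapsto k-1$ in the second sum, one obtains
\[
\sum_{k\in\Z}\rho^{pk}\mu(E_k)\;=\;\sum_{k\in\Z}a_{f,p,k,\rho}-\rho^{-p}\sum_{k\in\Z}a_{f,p,k,\rho}\;=\;\frac{\rho^{p}-1}{\rho^{p}}\sum_{k\in\Z}a_{f,p,k,\rho}.
\]
Substituting this identity into the two-sided bound from the first paragraph immediately produces (\ref{fpguanxi}).

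The argument is essentially bookkeeping, so there is no genuinely hard step; the only point demanding a little care is justifying the telescoping/reindexing of the two doubly infinite sums. I would handle this by first truncating to finite ranges $|k|\leq N$ (the boundary terms vanish in the limit because $f\in L^p(\mu)$ forces $\rho^{pk}\mu(f\geq\rho^{k})\to 0$ as $k\to +\infty$ by Chebyshev's inequality, and as $k\to -\infty$ the partial sums of $a_{f,p,k,\rho}$ still telescope to a finite cumulative difference since each $E_k$ has finite measure inside $\{f\geq 1\}$ and summability on $\{f<1\}$ follows from $\|f\|_p<\infty$), and then letting $N\to\infty$. This makes the manipulation of the doubly infinite sums rigorous and completes the proof.
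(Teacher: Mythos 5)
Your proof is correct and is essentially the same as the paper's: both partition $M$ into the dyadic slabs $E_k=\{\rho^k\leq f<\rho^{k+1}\}$, apply the pointwise bounds $\rho^{pk}\leq f^p<\rho^{p(k+1)}$ on $E_k$, and telescope using $\mu(E_k)=\mu(f\geq\rho^k)-\mu(f\geq\rho^{k+1})$ together with an index shift. The only difference is cosmetic: you compute the central quantity $\sum_k\rho^{pk}\mu(E_k)=\frac{\rho^p-1}{\rho^p}\sum_k a_{f,p,k,\rho}$ once and then deduce both bounds, whereas the paper runs the telescoping separately for each side (and you take a little extra care with the convergence/truncation of the doubly-infinite sums, which the paper leaves implicit).
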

\begin{proof}
From
\ba
\intm f^{p}\dmu
&=&\sum\limits_{k\in\Z}\int_{\rho^{k}\leq f\leq \rho^{k+1}}f^p\dmu\no\\
&\leq&\sum\limits_{k\in\Z}\rho^{p(k+1)}\bigg(\mu(f\geq \rho^{k})-\mu(f\geq \rho^{k+1})\bigg)\no\\
&=&\rho^{p}\sum\limits_{k\in\Z}a_{f,p,k,\rho}-\sum\limits_{k\in\Z}a_{f,p,k+1,\rho}\no\\
&=&(\rho^{p}-1)\sum\limits_{k\in\Z}a_{f,p,k,\rho}\no
\ea
and
\ba
\intm f^{p}\dmu
&=&\sum\limits_{k\in\Z}\int_{\rho^{k}\leq f\leq \rho^{k+1}}f^p\dmu\no\\
&\geq&\sum\limits_{k\in\Z}\rho^{pk}\bigg(\mu(f\geq \rho^{k})-\mu(f\geq \rho^{k+1})\bigg)\no\\
&=&\sum\limits_{k\in\Z}a_{f,p,k,\rho}-\frac{1}{\rho^{p}}\sum\limits_{k\in\Z}a_{f,p,k+1,\rho}\no\\
&=&\frac{\rho^{p}-1}{\rho^{p}}\sum\limits_{k\in\Z}a_{f,p,k,\rho},\no
\ea
we can deduce (\ref{fpguanxi}).
\end{proof}
\begin{lem}\label{lem23}
For $f\in \fa$ and $1\leq p\leq +\infty$, we have
\ba\label{fguanxi2}
\lf(\frac{\rho-1}{\rho}\rt)^{p}\frac{1}{\rho^p-1}\|f\|_{p}^p\leq\sum\limits_{k\in\Z}\|f_{\rho,\,k}\|_{p}^p\leq \lf(\frac{\rho-1}{\rho}\rt)^{p-1}\|f\|_{p}^p.
\ea
\end{lem}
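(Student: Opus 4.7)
The plan is to prove both inequalities in \eqref{fguanxi2} via a layer-cake analysis. Decompose $\{f>0\}$ into the disjoint level sets $A_j:=\{\rho^j\leq f<\rho^{j+1}\}$, $j\in\Z$. From the definition of $f_{\rho,k}$, one reads off directly that on $A_j$: $f_{\rho,k}=\rho^k(\rho-1)$ for $k<j$, $f_{\rho,k}=f-\rho^j$ for $k=j$, and $f_{\rho,k}=0$ for $k>j$.

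For the lower bound, the pointwise estimate $f_{\rho,k}\geq\rho^k(\rho-1)\chi_{\{f\geq\rho^{k+1}\}}$ is immediate from the formula above. Integrating gives $\|f_{\rho,k}\|_p^p\geq\rho^{-p}(\rho-1)^p\,a_{f,p,k+1,\rho}$; a shift of index and summation, followed by the upper half of \eqref{fpguanxi} (rearranged as $\sum_{k\in\Z}a_{f,p,k,\rho}\geq\|f\|_p^p/(\rho^p-1)$), deliver the first inequality.

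For the upper bound, the crude majorization $f_{\rho,k}\leq\rho^k(\rho-1)\chi_{\{f\geq\rho^k\}}$ combined with \eqref{fpguanxi} only yields a constant of order $(\rho-1)^p\rho^p/(\rho^p-1)$, which is too weak by a factor of roughly $\rho^p$. Instead I would argue pointwise on each $A_j$: summing the geometric series explicitly,
$$
\sum_{k\in\Z}f_{\rho,k}(x)^p\;=\;\frac{(\rho-1)^p\rho^{jp}}{\rho^p-1}+\bigl(f(x)-\rho^j\bigr)^p,\qquad x\in A_j.
$$
Setting $t:=f(x)/\rho^j\in[1,\rho]$, the target pointwise bound $\sum_k f_{\rho,k}(x)^p\leq((\rho-1)/\rho)^{p-1}f(x)^p$ reduces to the scalar inequality
$$
\psi(t):=\lf(\frac{\rho-1}{\rho}\rt)^{p-1}t^p-(t-1)^p-\frac{(\rho-1)^p}{\rho^p-1}\geq 0,\qquad t\in[1,\rho].
$$
A short algebraic simplification gives $\psi(1)=(\rho-1)^{p-1}(\rho^{p-1}-1)/[\rho^{p-1}(\rho^p-1)]\geq 0$. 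Differentiating, $\psi'(t)=p\bigl[((\rho-1)/\rho)^{p-1}t^{p-1}-(t-1)^{p-1}\bigr]$, and taking $(p-1)$-th roots shows $\psi'(t)\geq 0$ precisely when $(\rho-1)t/\rho\geq t-1$, i.e.\ $t\leq\rho$. Thus $\psi$ is nondecreasing on $[1,\rho]$, whence $\psi\geq\psi(1)\geq 0$. Integrating the pointwise bound over $M$ yields the second inequality in \eqref{fguanxi2}.

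The main obstacle is the sharp upper bound: any argument based solely on the crude majorization and \eqref{fpguanxi} forfeits a factor of roughly $\rho^p$, and recovering the correct exponent $p-1$ forces the pointwise reduction to $\psi$. The decisive feature making $\psi\geq 0$ work is that the right endpoint is exactly the critical point, $\psi'(\rho)=0$, so that $[1,\rho]$ is precisely an interval of monotonicity for $\psi$ and the single boundary value $\psi(1)\geq 0$ controls the whole interval.
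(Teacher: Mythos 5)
Your proof is correct. For the lower bound you follow essentially the same strategy as the paper: a pointwise minorization of $f_{\rho,k}$ by $\rho^k(\rho-1)\chi_{\{f\geq\rho^{k+1}\}}$, followed by a reindexed sum of the $a_{f,p,k,\rho}$ and the right-hand half of (\ref{fpguanxi}). For the upper bound, however, your route is genuinely different from the paper's. The paper represents each $\|f_{\rho,k}\|_p^p$ by the layer-cake identity $p\int_{\rho^k}^{\rho^{k+1}}(s-\rho^k)^{p-1}\mu(f\geq s)\,\md s$, sums over $k$, and bounds the resulting weight inside the integral by $\sup_{s\in[\rho^k,\rho^{k+1}]}\lf(\frac{s-\rho^k}{s}\rt)^{p-1}=\lf(\frac{\rho-1}{\rho}\rt)^{p-1}$, whereas you establish the strictly stronger pointwise inequality $\sum_{k}f_{\rho,k}(x)^p\leq\lf(\frac{\rho-1}{\rho}\rt)^{p-1}f(x)^p$ by summing the geometric series explicitly on each level band $A_j$ and verifying the scalar inequality $\psi\geq 0$ via a monotonicity and endpoint check. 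The two arguments rest on the same elementary fact --- that $(s-\rho^k)/s\leq(\rho-1)/\rho$ on $[\rho^k,\rho^{k+1}]$, which is precisely your condition $\psi'(t)\geq 0$ for $t\leq\rho$ --- but the paper deploys it under the layer-cake integral and so sidesteps both the geometric-series summation and the calculus verification, while your version buys the sharper pointwise conclusion at the cost of a little extra bookkeeping. Your side remark that the crude majorization $f_{\rho,k}\leq\rho^k(\rho-1)\chi_{\{f\geq\rho^k\}}$ combined with (\ref{fpguanxi}) loses a factor of order $\rho^p$ is accurate and explains why one of these finer structures is needed.
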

\begin{proof}
Since
\ba
\int_{M}|f_{\rho,\,k}|^{p}\dmu
&=&p\int_{0}^{\rho^{k+1}-\rho^{k}} t^{p-1}\mu(f-\rho^{k}\geq t)\md t\no\\
&=&p\int_{\rho^{k}}^{\rho^{k+1}}(s-\rho^{k})^{p-1}\mu(f\geq s)\md s,\no
\ea
for $p\geq 1$, we have
\ba
\sum\limits_{k\in\Z}\int_{M}|f_{\rho,\,k}|^{p}\dmu
&=&p\sum\limits_{k\in\Z}\int_{\rho^{k}}^{\rho^{k+1}}(s-\rho^{k})^{p-1}\mu(f\geq s)\md s\no\\
&\leq&\lf\{\sup\limits_{k\in\Z}\sup\limits_{s\in[\rho^{k},\,\rho^{k+1}]}\lf(\frac{ s-\rho^{k} }{s}\rt)^{p-1}\rt\}
\lf\{p\sum\limits_{k\in\Z}\int_{\rho^{k}}^{\rho^{k+1}}s^{p-1}\mu(f\geq s)\md s\rt\}\no\\
&= &\lf(\frac{ \rho-1 }{\rho}\rt)^{p-1}\int_{M} f ^p\dmu.\no
\ea
On the other hand, we have
\ba
\sum\limits_{k\in\Z}\int_{M}|f_{\rho,\,k}|^{p}\dmu
&=&p\sum\limits_{k\in\Z}\int_{\rho^{k}}^{\rho^{k+1}}(s-\rho^{k})^{p-1}\mu(f\geq s)\md s\no\\
&\geq&\sum\limits_{k\in\Z}\mu(f\geq \rho^{k+1})p\int_{\rho^{k}}^{\rho^{k+1}}(s-\rho^{k})^{p-1} \md s\no\\
&= &\lf(\frac{\rho-1}{\rho}\rt)^{p}\sum\limits_{k\in\Z}a_{f,p,k+1,\rho}\no\\
&\geq&\lf(\frac{\rho-1}{\rho}\rt)^{p}\frac{1}{\rho^p-1}\|f\|_{p}^p.\no
\ea
Thus, we can obtain (\ref{fguanxi2}).
\end{proof}
For $p,s \in (0,\,+\infty]$ and $\vartheta \in (0,\,1]$, assume that there holds
$$
\|f\|_{p}\leq (CW(f))^{\vartheta}\|f\|_{s}^{1-\vartheta}, \eqno{(S_{p,s}^{\vartheta})}
$$
where the associated parameter $q\in (-\infty,\,0)\cup (0,\,+\infty)\cup\{\infty\}$
by setting
\be\label{q}
\frac{1}{p}=\frac{\vartheta}{q}+\frac{1-\vartheta}{s}.
\ee
\begin{lem} \label{fao}
For a function $f\in \fa$, define
$$
\varphi:\;u\longmapsto \ln \|f\|_{\frac{1}{u}}.
$$
Then $\varphi''(u)\geq0$.
\end{lem}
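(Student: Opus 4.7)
The plan is to recognize this as the classical log-convexity of the $L^{p}$-norm in the parameter $1/p$, which is a direct consequence of Hölder's inequality. Equivalently, one may compute $\varphi''$ by hand and reduce the claim to Cauchy--Schwarz; I describe both routes, preferring the Hölder one.

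For the Hölder approach, fix $u_{0},u_{1}$ in the domain of $\varphi$ and $\theta\in(0,1)$, and set $u:=\theta u_{0}+(1-\theta)u_{1}$. Put $p=1/u$ and $p_{i}=1/u_{i}$, so that by construction $\frac{1}{p}=\frac{\theta}{p_{0}}+\frac{1-\theta}{p_{1}}$. Writing $f^{p}=f^{\theta p}\cdot f^{(1-\theta)p}$ and applying Hölder's inequality with the conjugate exponents $a=p_{0}/(\theta p)$ and $b=p_{1}/((1-\theta)p)$ (for which $1/a+1/b=1$ reduces exactly to the defining relation above) yields
$$\intm f^{p}\dmu\leq\lf(\intm f^{p_{0}}\dmu\rt)^{\theta p/p_{0}}\lf(\intm f^{p_{1}}\dmu\rt)^{(1-\theta)p/p_{1}}.$$
Taking the $1/p$-th power and then logarithms gives the midpoint-in-$u$ inequality
$$\varphi(u)\leq\theta\varphi(u_{0})+(1-\theta)\varphi(u_{1}),$$
so $\varphi$ is convex on its domain. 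Because every $f\in\fa$ lies in every $L^{p}$, $\varphi$ is $C^{2}$ on the interior of its domain by differentiation under the integral (justified by dominated convergence), and convexity is then equivalent to $\varphi''\geq 0$.

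Alternatively, a direct computation with $h(u):=\intm f^{1/u}\dmu$ and $\varphi(u)=u\ln h(u)$ yields, after the two middle terms from $\varphi''=2h'/h+uh''/h-u(h')^{2}/h^{2}$ cancel,
$$\varphi''(u)=\frac{1}{u^{3}h(u)^{2}}\lf[h(u)\intm f^{1/u}(\ln f)^{2}\dmu-\lf(\intm f^{1/u}\ln f\,\dmu\rt)^{2}\rt].$$
The bracketed quantity is nonnegative by the Cauchy--Schwarz inequality applied to the pair $f^{1/(2u)}$ and $f^{1/(2u)}\ln f$, and in the relevant range $u>0$ the prefactor $u^{-3}$ is positive, giving $\varphi''(u)\geq 0$.

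The argument is standard throughout and there is no real obstacle; the only mild care needed is the justification of differentiation under the integral sign and the handling of the set $\{f=0\}$ where $\ln f=-\infty$, both of which are routine under the integrability built into the definition of $\fa$.
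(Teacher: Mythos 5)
Your proposal is correct, and it in fact subsumes the paper's argument. Your second ("direct computation") route is essentially what the paper does, just packaged differently: the paper rewrites $\varphi'(u)=\phi(1/u)$ with $\phi(r)=-\|f\|_r^{-r}\intm f^r\ln\lf(f/\|f\|_r\rt)^r\dmu$, shows $\phi'(r)\le 0$ by Cauchy--Schwarz applied to $f^{r/2}$ and $f^{r/2}\ln f$, and concludes $\varphi''(u)=-u^{-2}\phi'(1/u)\ge 0$. Your direct differentiation of $\varphi(u)=u\ln h(u)$ with $h(u)=\intm f^{1/u}\dmu$ lands on exactly the same Cauchy--Schwarz determinant $h\intm f^{1/u}(\ln f)^2\dmu-\lf(\intm f^{1/u}\ln f\,\dmu\rt)^2\ge 0$ with the positive prefactor $u^{-3}h^{-2}$; I checked the algebra and your stated formula for $\varphi''$ is correct (the $2h'/h$ contribution is cancelled by part of $uh''/h$). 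Your primary route via Hölder is a genuinely different and cleaner argument: it proves convexity of $u\mapsto\ln\|f\|_{1/u}$ directly at the level of the function by interpolation, with no differentiation of integrals, and then infers $\varphi''\ge 0$ from smoothness. The Hölder route is more conceptual and robust (it gives convexity even without smoothness), at the small cost of a separate, routine justification that $\varphi$ is $C^2$ for $f\in\fa$, whereas the computational route produces $\varphi''$ and its sign in one stroke. Both are standard and correct; the paper happens to use the computational one.
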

\begin{proof}
Noting that
\ba
\varphi'(u)=-\|f\|_{\frac{1}{u}}^{-\frac{1}{u}}\intm f^{\frac{1}{u}}\ln\lf(\frac{f}{\|f\|_{\frac{1}{u}}}\rt)^{\frac{1}{u}},\no
\ea
for convenience, we define
$$
\phi(r):=-\|f\|_{r}^{-r}\intm f^{r}\ln\lf(\frac{f}{\|f\|_{r}}\rt)^{r}.
$$
Then we have
\ba
\phi'(r)&=&\frac{r}{\|f\|_{r}^{2r}}\lf\{\lf[\intm f^r\ln f\dmu\rt]^2-\lf(\intm f^r\dmu\rt)\lf(\intm f^r(\ln f)^2\dmu\rt)\rt\}\no\\
&\leq&\frac{r}{\|f\|_{r}^{2r}}\Bigg\{\lf[\intm \lf(f^{\frac{r}{2}}\rt)^2 \dmu\rt]
\lf[\intm \lf(f^{\frac{r}{2}}\ln f\rt)^2\dmu\rt]\no\\
&& \quad\quad\quad\quad-\lf(\intm f^r\dmu\rt)\lf(\intm f^r(\ln f)^2\dmu\rt)\Bigg\}=0.\no
\ea
Thus
$$
\varphi''(u)=-\frac{1}{u^2}\phi'\lf(\frac{1}{u}\rt)\geq0.
$$
\end{proof}
\begin{thm}[See Theorem 10.2 in \cite{bakry}]
If for any $f\in\fa$, we have logarithmic Sobolev inequality
$$
\intm\lf[f^p\ln\lf(\frac{f}{\|f\|_p}\rt)^p\dmu\rt]\leq \lf(\frac{1}{p}-\frac{1}{q}\rt)^{-1}\|f\|_{p}^p\ln\lf(\frac{CW(f)}{\|f\|_{p}}\rt)\eqno{(LS_{p}^q)},
$$
then we can deduce ($S_{p,s}^{\vartheta}$) for all $0<s<p$ and vise versa.
\end{thm}
\begin{proof}
From Lemma \ref{fao}, the function
$$
\psi(u)=\frac{\varphi(u)-\varphi\lf(\frac{1}{p}\rt)}{u-\frac{1}{p}}
$$
is increasing of $u$, where we can define $\psi\lf(\frac{1}{p}\rt)=\varphi'\lf(\frac{1}{p}\rt)$.

Therefore, from ($LS_{p}^q$), for $0<s<p$ we can deduce (noticing that $\frac{1}{p}>\frac{1}{q}$)
\ba
-\psi(s)&\leq &\psi\lf(\frac{1}{p}\rt)=\varphi'\lf(\frac{1}{p}\rt)\no\\
&=&\|f\|_{p}^{-p}\intm f^{p}\ln\lf(\frac{f}{\|f\|_{p}}\rt)^{p}\no\\
&\leq& \lf(\frac{1}{p}-\frac{1}{q}\rt)^{-1} \ln\lf(\frac{CW(f)}{\|f\|_{p}}\rt) ,\no
\ea
which is   ($S_{p,s}^{\vartheta}$) exactly.

Now assume ($S_{p,s}^{\vartheta}$) holds for any $0<s<p$. Rewrite ($S_{p,q}^{\vartheta}$) as
$$
\lf(\frac{\|f\|_p}{\|f\|_s}\rt)^{\lf(\frac{1}{s}-\frac{1}{p}\rt)^{-1}}\leq\lf(\frac{CW(f)}{\|f\|_s}\rt)^{\lf(\frac{1}{s}-\frac{1}{q}\rt)^{-1}}.
$$
Taking logarithms, we have
$$
\lf(\ln\|f\|_p-\ln\|f\|_s\rt)\lf(\frac{1}{s}-\frac{1}{p}\rt)^{-1}\leq \lf(\frac{1}{s}-\frac{1}{q}\rt)^{-1}
\ln\lf(\frac{CW(f)}{\|f\|_{s}}\rt).
$$
Letting $s\longrightarrow p$, we get ($LS_{p}^q$).
\end{proof}
\begin{lem}\label{lem6}
If for $\al>0$, there holds
$$
\lf(\sum\limits_{k\in\Z}W(f_{\rho,k})^{\al}\rt)^{\frac{1}{\al}}\leq A(\al,\rho)W(f),\quad \forall\;f\in\fa,
$$
where $A(\al,\rho)$ is a constant depending on $\al$ and $\rho$, then ($S_{p,s}^{\vartheta}$) implies
$$
\|f\|_q\leq  (\rho^{q}-1)^{\frac{1}{q}}\rho^{\frac{q-s}{p-s}}\frac{CA(p,\rho)}{\rho-1} W(f).\eqno{(S_{q,p,s})}
$$
\end{lem}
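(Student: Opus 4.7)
The plan is to combine the layer–cake decomposition $\|f\|_q^q\le(\rho^q-1)\sum_{k\in\Z}\rho^{qk}a_k$, obtained from (\ref{fpguanxi}) applied with exponent $q$ (where $a_k:=\mu(f\ge\rho^k)$), with an application of $(S_{p,s}^{\vartheta})$ to each truncation $f_{\rho,k-1}$. Writing $u_k:=\rho^{qk}a_k$, the problem reduces to bounding $S:=\sum_k u_k$ by an appropriate multiple of $W(f)^q$.

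For each $k$, I will invoke the pointwise inequalities $\rho^{k-1}(\rho-1)a_k^{1/p}\le\|f_{\rho,k-1}\|_p$ (since $f_{\rho,k-1}\equiv\rho^{k-1}(\rho-1)$ on $\{f\ge\rho^k\}$) and $\|f_{\rho,k-1}\|_s\le\rho^{k-1}(\rho-1)a_{k-1}^{1/s}$ (since $f_{\rho,k-1}$ is supported on $\{f\ge\rho^{k-1}\}$ and bounded by $\rho^{k-1}(\rho-1)$). Substituting these into $(S_{p,s}^{\vartheta})$, raising to the $p$-th power, and using the identity $p(1-\vartheta)/s=1-\beta$ (immediate from (\ref{q})) with $\beta:=p\vartheta/q$, a short rearrangement yields
\[
u_k\;\le\;\frac{C^{p\vartheta}\,\rho^q}{(\rho-1)^{p\vartheta}}\,W(f_{\rho,k-1})^{p\vartheta}\,u_{k-1}^{1-\beta}.
\]

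Summing over $k$ and applying H\"older's inequality with conjugate exponents $(1/\vartheta,\,1/(1-\vartheta))$ decouples the two factors. The $W$-term acquires exponent $p$, which is exactly where the hypothesis of the lemma (with $\alpha=p$) gives $\sum_k W(f_{\rho,k-1})^p\le A(p,\rho)^p W(f)^p$. The $u$-term acquires exponent $(1-\beta)/(1-\vartheta)=p/s>1$, so the elementary embedding $\ell^{p/s}\hookrightarrow\ell^{1}$ for nonnegative sequences yields $\sum_k u_{k-1}^{p/s}\le S^{p/s}$. The result is a self-bounding estimate $S\le \kappa\,W(f)^{p\vartheta}\,S^{1-\beta}$ for an explicit constant $\kappa$, which solves to $S\le\kappa^{1/\beta}\,W(f)^{q}$ using $p\vartheta/\beta=q$.

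Inserting this bound into $\|f\|_q^q\le(\rho^q-1)S$ and taking $q$-th roots, the identity $q/(p\vartheta)=(q-s)/(p-s)$ (once more from (\ref{q})) produces the exponent $\rho^{(q-s)/(p-s)}$ in the conclusion, and the factors $C$, $A(p,\rho)$ and $(\rho-1)^{-1}$ emerge with their correct powers, giving $(S_{q,p,s})$. The main obstacle is the bookkeeping through the H\"older step: the split has to simultaneously produce the exponent $p$ on $W$ (to match the hypothesis with $\alpha=p$) and the exponent $p/s$ on $u_{k-1}$ (to apply $\ell^{p/s}\hookrightarrow\ell^{1}$); this is precisely what forces $\beta=p\vartheta/q$ and the identity $p(1-\vartheta)/s=1-\beta$ to line up in the right places.
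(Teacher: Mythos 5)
Your argument is correct and is essentially the paper's own proof: you apply $(S_{p,s}^{\vartheta})$ to the truncations $f_{\rho,k-1}$, use the two layer-cake bounds on $\|f_{\rho,k-1}\|_p$ and $\|f_{\rho,k-1}\|_s$ to derive the recursion $u_k \le \rho^q(\rho-1)^{-p\vartheta}(CW(f_{\rho,k-1}))^{p\vartheta}u_{k-1}^{1-\beta}$, then sum, apply H\"older with exponents $1/\vartheta$ and $1/(1-\vartheta)$, invoke $\ell^{p/s}\hookrightarrow\ell^1$, and close the self-bounding inequality. The only cosmetic difference is your index shift $k\mapsto k-1$; the exponent identities $1-\beta=p(1-\vartheta)/s$ and $q/(p\vartheta)=(q-s)/(p-s)$ you rely on are exactly those the paper uses.
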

\begin{proof}
From ($S_{p,s}^{\vartheta}$), we have
\ba\label{srsv1}
\int_{M} f_{\rho,k}^{p}\dmu\leq (CW(f_{\rho,k}))^{p\vartheta }\lf(\intm f_{\rho,k}^{s}\dmu\rt)^{\frac{p(1-\vartheta)}{s}}.
\ea
Since
\ba\label{srsv2}
\intm f_{\rho,k}^{s}\dmu
&\leq& \rho^{sk}(\rho-1)^s\mu(f\geq \rho^{k})\no\\
\int_{M} f_{\rho,k}^{p}\dmu
&\geq& \rho^{pk}(\rho-1)^p\mu(f\geq \rho^{k+1}),
\ea
we can deduce
\ba
a_{f,q,k+1,\rho}\leq \rho^{q}(\rho-1)^{-p\vartheta}(CW(f_{\rho,k}))^{p\vartheta }a_{f,q,k,\rho}^{\frac{p(1-\vartheta)}{s}}.\no
\ea
Therefore, we have
\ba
\sum\limits_{k\in\Z}a_{f,q,k,\rho}
&=&\sum\limits_{k\in\Z}a_{f,q,k+1,\rho}\no\\
&\leq&\sum\limits_{k\in\Z}\rho^{q}(\rho-1)^{-p\vartheta}(CW(f_{\rho,k}))^{p\vartheta }a_{f,q,k,\rho}^{\frac{p(1-\vartheta)}{s}}\no\\
&\leq&\rho^{q}(\rho-1)^{-p\vartheta}C^{p\vartheta }
\lf(\sum\limits_{k\in\Z}W(f_{\rho,k})^{p}\rt)^{\vartheta }
\lf(\sum\limits_{k\in\Z}a_{f,q,k,\rho}^{\frac{p }{s}}\rt)^{1-\vartheta }\no\\
&\leq&\rho^{q}(\rho-1)^{-p\vartheta}C^{p\vartheta }
\lf(\sum\limits_{k\in\Z}W(f_{\rho,k})^{p}\rt)^{\vartheta }
\lf(\sum\limits_{k\in\Z}a_{f,q,k,\rho}\rt)^{\frac{p(1-\vartheta)}{s}}.\no
\ea
Therefore, we have
\ba\label{yong}
\sum\limits_{k\in\Z}a_{f,q,k,\rho}
&\leq &\rho^{q\frac{q-s}{p-s}}(\rho-1)^{-q}C^{q}\lf(\sum\limits_{k\in\Z}W(f_{\rho,k})^{p}\rt)^{\frac{q}{p} }.
\ea
Taking $p=q$ in (\ref{fpguanxi}), from (\ref{yong}), we can deduce
\ba
\intm f^{q}\dmu
&\leq&(\rho^{q}-1)\rho^{q\frac{q-s}{p-s}}(\rho-1)^{-q}C^{q}\lf(\sum\limits_{k\in\Z}W(f_{\rho,k})^{p}\rt)^{\frac{q}{p} }\no\\
&\leq&(\rho^{q}-1)\rho^{q\frac{q-s}{p-s}}(\rho-1)^{-q}C^{q}A(p,\rho)^qW(f)^{q},\no
\ea
which is ($S_{q,p,s}$) as desired.
\end{proof}
Let $(M,\,g)$ be an $n$-dimensional Riemannian manifold. Then for $  f  \in \fa$, define non-negative functional
$$
W(f)=\lf(\int_{M}(|\n f|^p+S f ^{p})\md \mu +c\intm f^p\md\mu\rt)^{\frac{1}{p}},
$$
where
$S\in C^{0}(M,\,\R)$ and $c$ is a constant.
\begin{lem}\label{lem7}
If $c+S\geq0$ and $1\leq p<+\infty$, then we have
\ba
\lf(\sum\limits_{k\in\Z}W(f_{\rho,k})^{\al}\rt)^{\frac{1}{\al}}\no
\leq W(f),
\ea
where any $\al\geq p$ is   constant.
\end{lem}
\begin{proof}
Since
$c+S\geq 0$, we can consider $(c+S)\dmu$ as a new measure. Therefore, for $p\geq 1$,  similar to Lemma \ref{lem23}, we can also deduce
\ba
\sum\limits_{k\in\Z}\int_{M}(c+S) f_{\rho,\,k} ^{p}\dmu \leq  \lf(\frac{ \rho-1 }{\rho}\rt)^{p-1}\int_{M}(c+S) f ^p\dmu.\no
\ea
Obviously, there holds
\ba
\sum\limits_{k\in\Z}\int_{M}|\n f_{\rho,\,k}|^{p}\dmu
&=&\sum\limits_{k\in\Z}\int_{\rho^{k}\leq  f \leq \rho^{k+1}}|\n f  |^{p}\dmu\no\\
&=& \int_{M}|\n f  |^{p}\dmu.\no
\ea
Therefore, for $\al\geq p$, we can get
\ba
\lf(\sum\limits_{k\in\Z}W(f_{\rho,k})^{\al}\rt)^{\frac{1}{\al}}
&=&   \lf(\sum\limits_{k\in\Z}\lf(\int_{M}\lf(|\n f_{\rho,\,k}|^p+S f_{\rho,\,k} ^{p}\rt)\md \mu+c \int_{M} f_{\rho,\,k} ^{p}\md \mu\rt)^{\frac{\al}{p}}\rt)^{\frac{1}{\al}}\no\\
&\leq&\lf( \sum\limits_{k\in\Z}\int_{M} |\n f_{\rho,\,k}|^p\md \mu
+ \sum\limits_{k\in\Z}\int_{M} (c+ S)f_{\rho,\,k} ^{p}\md \mu
\rt)^{\frac{1}{p}}\no\\
&\leq &\lf(  \int_{M} |\n f |^p\md \mu
+ \lf(\frac{ \rho-1 }{\rho}\rt)^{p-1}\int_{M} (c+ S)f  ^{p}\md \mu
\rt)^{\frac{1}{p}}\no\\
&\leq &W(f).\no
\ea
\end{proof}
\section{Preliminaries of geometric flow}\label{pre}
In this section, we give some fundamental properties about the geometric flow (\ref{grf}).
Let $(M,\,g)$ be an $n$-dimensional compact Riemannian manifold.
Motivated by \cite{perelman}, fixing a real-valued function $S\in C^{\infty}(M,\R)$, we can define, for any $h\in C^{\infty}(M,\R)$ with $\int_{M}e^{-h}\dmu=1$,
\ban
\F(g,h)=\int_{M}(S+|\n h|^2)e^{-h}\dmu\no
\ean
and
\begin{align}\label{entropy}
\W(g,f,\tau)=\int_{M}\lf[\tau(S+|\n f|^2)+f-n\rt]\frac{e^{-f}}{(4\pi \tau)^{\frac{n}{2}}}\dmu,
\end{align}
where $\tau$ is a positive number and $f\in C^{\infty}(M, \mathbb{R})$ satisfies
\be\label{vol-1}
\int_M \frac{e^{-f}}{(4\pi\tau)^{\frac{n}{2}}}\dmu=1.
\ee
Let  $v=e^{-\frac{h}{2}}$ and
\ba\label{u}
u=\frac{e^{-\frac{f}{2}}}{(4\pi\tau)^{\frac{n}{4}}}.
\ea
Then we have
$$
\F(g,h)=\F^{\ast}(g,v)=\intm(4|\n v|^2+Sv^2)\dmu, \;\intm v^2\dmu=1.
$$
and
\ba\label{wwstar}
{\mathcal W}(g, f, \tau)={\mathcal W}^*(g, u, \tau) -\frac{n}{2}\ln \tau-\frac{n}{2}\ln(4\pi)-n
\ea
where
$$
{\mathcal W}^*(g, u,\tau)=\int_M \left[\tau(4|\nabla u|^2+S u^2)-u^2 \ln u^2 \right] \dmu, \;\intm u^2\dmu=1.
$$
We define
$$
4\ld_{0}(g):=\inf\limits_{\intm v^2\dmu=1}{\mathcal F}^*(g, v)
$$
and
$$
\mu^*(g, \tau):=\inf\limits_{\intm u^2\dmu=1}{\mathcal W}^*(g, u, \tau).
$$
In the case of geometric flow (\ref{grf}), we take the function $S$ as $S(x,t)$, the trace of time-dependent symmetric $2$-tensor $\mS$ with respect to Riemannian metric $g(x,t)$.
\begin{lem}\label{dandiao}
Assume that $g(x,t)$ is a smooth solution to the geometric flow (\ref{grf}) in $M\times[0,T)$.
Let $h$ be a positive solution to the backward heat equation
$$
\frac{\partial}{\partial t} h(x,\,t)=-\Delta_{g(x,\,t)} h+|\nabla h|_{g(x,\,t)}^{2}-S(x,\,t).
$$
Then we have
\ba
\frac{\md \F}{\md t}
&=&\int_{M}\bigg(2|h_{ij}+\mS_{ij}|^2  +\mathcal {D}_2( \mathcal{S},\nabla h)\bigg)e^{-h}\dmut\no
\ea
and
\ba\label{wmono}
\frac{\md \W}{\md t}=
 \int_{M} \tau\lf[ 2
 \lf|f_{ij}+\mS_{ij}-\frac{1}{2\tau}g_{ij}\rt|^{2}
 +\mathcal {D}_2( \mathcal{S},\nabla f)
\rt]\frac{e^{-f}}{(4\pi \tau)^{ \frac{n}{2}}}\dmut,
\ea
where
\ba \label{nonlinearconjugate}
\frac{\partial}{\partial t}f(x,\,t)=-\Delta_{g(x,\,t)} f(x,\,t)+|\nabla f|_{g(x,\,t)}^2-S(x,\,t)+\frac{n}{2\tau(t)}
\ea
and for any $\sigma>0$ and $0\leq t^{\ast}<T$,
$$
\tau(t)=t^{\ast}+\sigma-t.
$$
In particular, both $\F$ entropy and $\W$ entropy are non-decreasing in $t$ if $\mathcal {D}_2( \mathcal{S},\cdot)$ is nonnegative  and all times $t\in[0,T)$, from which we can get that
$\ld_{0}(g(t))$ is non-decreasing of $t$ and
\ba \label{mu-monotone}
\mu^*(g(t), \sigma) \ge \mu^*(g(0), t+\sigma)+ \frac{n}{2}\ln \frac{\sigma}{t+\sigma}
\ea
for all $ t \in [0, T)$ and $\sigma>0$ (the case $t=0$ is trivial).
\end{lem}
\begin{proof} The proof here is just direct computation. We use the method in \cite{fangzhu}. Set
\begin{eqnarray*}
P=2\Delta h-|\nabla h|^{2}+S
\end{eqnarray*}
By Lemma 2.1 in \cite{fangzhu}, let us take $\alpha=2,\beta=1,\lambda=0, a=1, b=d=0, c=-1$. Then we can get
\begin{align}\label{f3}
\frac{\partial P}{\partial t}=&-\Delta
P+2\nabla P\cdot\nabla h+2|h_{ij}+S_{ij}|^{2}+\frac{\partial S}{\partial t}-\Delta S-2|S_{ij}|^{2}\nonumber\\
&-2\nabla h\cdot\nabla S+4h_i\nabla_j \mS_{ij}-2\mS_{ij}h_i h_j+2R_{ij}h_i h_j\nonumber\\
=&-\Delta
P+2\nabla P\cdot\nabla h+2|h_{ij}+\mS_{ij}|^{2}+\mathcal {D}_2( \mathcal{S},\nabla h).
\end{align}
Combining (\ref{f3}) and the definition of $\F$ entropy, we derive
\begin{align*}
\frac{\md \F}{\md t}=&\frac{\md}{\md t}\int_M Pe^{-h}\dmut
=\int_M\lf(\frac{\partial P}{\partial t}-P\frac{\partial h}{\partial t}-PS\rt)e^{-h}\dmut\\
=&\int_M\bigg[-\Delta
P+2\nabla P\cdot\nabla h+2|h_{ij}+\mS_{ij}|^{2}+\mathcal {D}_2( \mathcal{S},\nabla h)\\
&\quad\quad\quad+P(\Delta h-|\nabla h|^{2}+S)-PS\bigg]e^{-h}\dmut\\
=&\int_M\bigg[-e^{h}\Delta (Pe^{-h})+2|h_{ij}+\mS_{ij}|^{2}+\mathcal {D}_2( \mathcal{S},\nabla h)\bigg]e^{-h}\dmut\\
=&\int_{M}\bigg(2|h_{ij}+\mS_{ij}|^2  +\mathcal {D}_2( \mathcal{S},\nabla h)\bigg)e^{-h}\dmut.
\end{align*}
Hence, it follows that $\F$ entropy is non-decreasing.

The monotonicity of $\W$ entropy had been  proved in Theorem 3.1 of \cite{huanghong} (see also \cite{fangzhu,guohongxin}).

Since $\mathcal {D}_2( \mathcal{S},\cdot)$ is nonnegative, from (\ref{wwstar}) and (\ref{wmono}), we have
$$
\frac{\md}{\md t} {\mathcal W}^*(g,u,\tau) \ge \frac{n}{2}\frac{\md}{\md t} \ln \tau,
$$
where
$$
u=u(t)=
\frac{e^{-\frac{f(t)}{2}}}{(4\pi\tau(t))^{\frac{n}{4}}},
$$
which satisfies the equation
$$
\frac{\partial u}{\partial t}=-\Delta u-\frac{|\nabla u|^2}{u}+\frac{S}{2}u.
$$
It follows that
$$
\mu^*(g(t_1), \tau(t_1)) \le \mu^*(g(t_2, \tau(t_2))+\frac{n}{2} \ln \frac{\tau(t_1)}{\tau(t_2)}.
$$
Choosing
$t_1=0$
and
$t_2=t^*$
we can obtain
\ba \label{mu-1}
\mu^*(g(0), t^*+\sigma) \le \mu^*(g(t^*), \sigma)+\frac{n}{2} \ln \frac{t^*+\sigma}{\sigma}.
\ea
Since $0<t^*<T$ is arbitrary,  (\ref{mu-1}) can be rewritten  as (\ref{mu-monotone}).

Similarly, we can get that
$\ld_{0}(g(t))$ is non-decreasing of $t$.
\end{proof}
\begin{rem}
The authors would like to thank Professor Hong Huang for pointing out the references \cite{guohongxin,huanghong}.
\end{rem}
\begin{lem}\label{sjie}
Assume that $g(x,t)$ is a smooth solution to the geometric flow (\ref{grf}) in $M\times[0,T)$ and that $\mathcal {D}_2( \mathcal{S},\cdot)$ is nonnegative. We have
\ba\label{parabolic}
\min\limits_{x\in M}S(x,t)\geq \min\limits_{x\in M}S(x,0).
\ea
Moreover, we have either
\be\label{s1}
 S(x,t) \geq 0,
\ee
or
\be\label{s2}
 \min\limits_{x\in M}  S(x,t)\geq \frac{1}{\frac{1}{\min\limits_{x\in M}S(x,0)}-\frac{2t}{n}}.
\ee
\end{lem}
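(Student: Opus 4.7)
\textbf{Proof plan for Lemma \ref{sjie}.} The idea is to extract a scalar evolution inequality for $S$ from the hypothesis $\mathcal{D}_2(\mathcal{S},\cdot)\ge 0$, then apply the parabolic minimum principle and a standard ODE comparison.

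\emph{Step 1: Scalar evolution inequality.} Taking $X=0$ in the definition (\ref{con}), the nonnegativity hypothesis gives immediately
\begin{equation*}
\frac{\partial S}{\partial t}\ \ge\ \Delta S + 2|\mathcal{S}_{ij}|^{2}.
\end{equation*}
Since $S=g^{ij}\mathcal{S}_{ij}$ is the trace of $\mathcal{S}$, the Cauchy--Schwarz inequality on the $n$-dimensional space of symmetric $2$-tensors yields $|\mathcal{S}_{ij}|^{2}\ge S^{2}/n$, so
\begin{equation*}
\frac{\partial S}{\partial t}\ \ge\ \Delta S + \frac{2}{n}S^{2}.
\end{equation*}

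\emph{Step 2: Minimum principle for (\ref{parabolic}).} Dropping the nonnegative term $\frac{2}{n}S^{2}$ gives $\partial_{t}S\ge\Delta S$. By the standard parabolic minimum principle on the compact manifold $M$ (applied with the time-dependent metric $g(t)$, whose Laplacian still annihilates spatial minima), the spatial minimum $s(t):=\min_{x\in M}S(x,t)$ is nondecreasing in $t$, giving (\ref{parabolic}).

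\emph{Step 3: ODE comparison for (\ref{s1})--(\ref{s2}).} Keeping the quadratic term, at a point where $s(t)$ is differentiable and attained at some $x_{t}$, one has $\Delta S(x_{t},t)\ge 0$, hence in the barrier/viscosity sense
\begin{equation*}
s'(t)\ \ge\ \frac{2}{n}s(t)^{2}.
\end{equation*}
If $s(0)\ge 0$, the inequality $s'\ge 0$ from Step 2 already gives $s(t)\ge 0$, which is (\ref{s1}). If $s(0)<0$, then $s(t)<0$ as long as the quadratic ODE bound keeps it negative; dividing by $s(t)^{2}>0$ and integrating yields
\begin{equation*}
-\frac{1}{s(t)}+\frac{1}{s(0)}\ \ge\ \frac{2t}{n},\qquad\text{i.e.}\qquad \frac{1}{s(t)}\ \le\ \frac{1}{s(0)}-\frac{2t}{n},
\end{equation*}
and since both sides are negative, taking reciprocals (which reverses the inequality) gives (\ref{s2}).

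\emph{Main obstacle.} The only delicate point is the use of the minimum principle on a compact manifold carrying an evolving metric: one must justify $s'(t)\ge\frac{2}{n}s(t)^{2}$ rigorously, e.g.\ via Hamilton's trick (Lipschitz continuity of $s$ together with evaluation of $\partial_{t}S$ at a spatial minimum where $\Delta S\ge 0$) or equivalently by comparing $S$ with the spatially constant ODE solution $\varphi(t)=\bigl(s(0)^{-1}-\tfrac{2t}{n}\bigr)^{-1}$ via the maximum principle applied to $S-\varphi$. Once this is in place, both claimed inequalities follow from elementary ODE integration.
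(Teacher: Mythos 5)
Your proposal is correct and follows essentially the same route as the paper: take $X=0$ in $\mathcal{D}_2(\mathcal{S},\cdot)\ge0$, apply Cauchy--Schwarz to get $|\mathcal{S}_{ij}|^2\ge S^2/n$, use the parabolic minimum principle for (\ref{parabolic}), and do an ODE comparison at the spatial minimum for (\ref{s1})--(\ref{s2}). You merely fill in details the paper leaves implicit (the Cauchy--Schwarz step, the explicit integration of $s'/s^2\ge 2/n$, and the caveat about Hamilton's trick for the evolving metric), and you correctly note that when $s(0)<0$ the bound (\ref{s2}) continues to hold trivially once $s(t)$ becomes nonnegative since the right-hand side of (\ref{s2}) stays negative.
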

\begin{proof}
Since $\mathrm{D}_{2}(\mS,\cdot)$ is nonnegative, taking $X=0$, we have
$$
\frac{\partial S}{\partial
t}-\Delta S-2|\mathcal{S}_{ij}|_{g(t)}^2\geq0,
$$
from which we can get
$$
\frac{\partial S}{\partial
t}-\Delta S-\frac{2}{n}S^2\geq0.
$$
From the maximum principle, we have (\ref{parabolic}).

If $\min\limits_{x\in M}S(x,0)\geq0$, we have (\ref{s1}). Otherwise, at the minimal point of $S(x,t)$, we have
\ba\label{s2yong}
\frac{\md}{\md t} \lf(\min\limits_{x\in M}  S(x,t)\rt)-\frac{2}{n}\lf[\min\limits_{x\in M}  S(x,t)\rt]^2\geq 0.
\ea
From the theory of ordinary differential equation,  by (\ref{s2yong}), we can get (\ref{s2}).
\end{proof}
\section{Proofs of theorems about (logarithmic) Sobolev inequalities}\label{logsob}
We will also need the following elementary lemma (See for example \cite{ye1}).
\begin{lem} \label{stronglemma} Let $a>0$ and $b$ be constants. Then the minimum of the
function $y=a \sigma-\frac{n}{2} \ln \sigma +b$ for $\sigma>0$ is
$\frac{n}{2} \ln(\alpha a)$, where
\ba \label{strong}
\alpha=\frac{2e}{n} e^{\frac{2b}{n}}.
\ea
\end{lem}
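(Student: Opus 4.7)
The plan is to treat this as a straightforward single-variable calculus problem: find the critical point of $y(\sigma)=a\sigma-\frac{n}{2}\ln\sigma+b$ on $(0,\infty)$, verify it is a minimum, substitute back, and then rewrite the resulting expression in the form $\frac{n}{2}\ln(\alpha a)$ for the stated $\alpha$.

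First I would compute $y'(\sigma)=a-\frac{n}{2\sigma}$. Since $a>0$, the unique zero on $(0,\infty)$ is $\sigma_{\ast}=\frac{n}{2a}$. Since $y''(\sigma)=\frac{n}{2\sigma^{2}}>0$ throughout $(0,\infty)$, the function $y$ is strictly convex, so $\sigma_{\ast}$ is the global minimizer. (Alternatively, one can simply note that $y(\sigma)\to +\infty$ as $\sigma\to 0^{+}$ and as $\sigma\to+\infty$, which also forces the interior critical point to be the minimum.)

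Next I would evaluate $y$ at $\sigma_{\ast}$:
\begin{equation*}
y(\sigma_{\ast})=a\cdot\frac{n}{2a}-\frac{n}{2}\ln\frac{n}{2a}+b=\frac{n}{2}+\frac{n}{2}\ln\frac{2a}{n}+b.
\end{equation*}
To put this in the claimed form, I would factor $\frac{n}{2}$ out of the constant terms by writing $\frac{n}{2}=\frac{n}{2}\ln e$ and $b=\frac{n}{2}\cdot\frac{2b}{n}=\frac{n}{2}\ln e^{2b/n}$, which gives
\begin{equation*}
y(\sigma_{\ast})=\frac{n}{2}\left[\ln\frac{2a}{n}+\ln e+\ln e^{2b/n}\right]=\frac{n}{2}\ln\!\left(\frac{2e}{n}e^{2b/n}\,a\right)=\frac{n}{2}\ln(\alpha a),
\end{equation*}
with $\alpha$ as in \eqref{strong}. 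There is no real obstacle here; the only thing to watch is the algebraic bookkeeping in matching $\frac{n}{2}+b$ with $\frac{n}{2}\ln(e\cdot e^{2b/n})$, but this is immediate. The lemma is then proved.
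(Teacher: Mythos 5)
Your proof is correct and is the standard single-variable calculus argument; the paper itself does not prove this lemma (it cites Ye's paper for it), and any proof would follow essentially this route. The algebra matches: $y(\sigma_{\ast})=\frac{n}{2}+\frac{n}{2}\ln\frac{2a}{n}+b=\frac{n}{2}\ln\bigl(\frac{2e}{n}e^{2b/n}a\bigr)$, as required.
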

\begin{proof}[Proof of Theorem \ref{thmlogsob1}]
For
$
u \in W^{1,2}(M)
$
with
$
\int_{M}u^2\dmuo=1,
$
taking
$$
\alpha=\frac{8(t+\sigma)}{n  C_S(M,g_0)^2},\quad S=S_0
$$ in  (\ref{logsobr1}), we have
\ba
\int_M u^2 \ln u^2\dmuo
&\le& (t+\sigma) \int_M \lf(4|\nabla u|_{0}^2+S_{0}u^2\rt)\dmuo\no\\
&&+\frac{n}{2}(2\ln   C_S(M,\,g_{0})+\ln n -2\ln2 -1)\nonumber \\
&&-\frac{n}{2} \ln (t+\sigma)+(t+\sigma)\lf(\frac{4}{   C_S(M,\,g_{0})^2 \mathrm{Vol}_{g_0}(M)^{\frac{2}{n}}}-\min S_{0}\rt).\no
\ea
It follows that
\ba\label{muo}
\mu^*(g(0), t+\sigma)
&\ge&
\frac{n}{2} \ln (t+\sigma) -(t+\sigma)\lf(\frac{4}{   C_S(M,\,g_{0})^2 \mathrm{Vol}_{g_0}(M)^{\frac{2}{n}}}-\min S_{0}\rt)\nonumber \\
&&-\frac{n}{2}(2\ln   C_S(M,\,g_{0})+\ln n -2\ln2 -1).
\ea
From (\ref{mu-monotone}) and (\ref{muo}), we can deduce
\ba
\mu^*(g(t), \sigma) &\ge& \frac{n}{2} \ln \sigma  -(t+\sigma)\lf(\frac{4}{   C_S(M,\,g_{0})^2 \mathrm{Vol}_{g_0}(M)^{\frac{2}{n}}}-\min S_{0}\rt)\nonumber \\
&&-\frac{n}{2}(2\ln   C_S(M,\,g_{0})+\ln n -2\ln2 -1),\no
\ea
or
\ba
\mu^*\lf(g(t), \frac{\sigma}{4}\rt)
&\ge&
\frac{n}{2} \ln \sigma
-\lf(t+\frac{\sigma}{4}\rt)\lf(\frac{4}{   C_S(M,\,g_{0})^2 \mathrm{Vol}_{g_0}(M)^{\frac{2}{n}}}-\min S_{0}\rt)
\nonumber \\
&&-\frac{n}{2}(2\ln   C_S(M,\,g_{0})+\ln n-1)\no
\ea
which is equivalent to (\ref{logsob1}).

Taking
$$
a=\int_M\lf(|\nabla u|_{t}^2+\frac{S_{t}}{4}u^2\rt)\dmut+\frac{A_1}{4}>0
$$
and
$b=A_1t+A_2$ in Lemma \ref{stronglemma}, from (\ref{logsob1}), we can get (\ref{stronglogsob1}).
\end{proof}
Before prove Theorem \ref{thmlogsob3}, we need the following lemma.
\begin{lem}\label{thmlogsob2}
Assume that $g(x,t)$ is a smooth solution to the geometric flow (\ref{grf}) in $M\times[0,T)$ and that $\mathcal{D}_{2}(\mS,\cdot)$ defined in (\ref{con}) is nonnegative. If $\ld_{0}(g_{0}) $ is positive, then  for any  $\sigma>0$ and $t \in [0, T)$   satisfying $t+\sigma \ge \frac{n}{8}C_S(M,g_0)^2\delta_0$, there holds
\ba
\int_M u^2 \ln u^2 \dmut &\le& \sigma \int_M \lf(|\nabla u|_{t}^2+\frac{S_{t}}{4}u^2\rt)\dmut -\frac{n}{2}\ln \sigma
\nonumber \\
&&+\frac{n}{2}\ln n + n \ln C_S(M,g_0)+\sigma_0(g_0)\label{sobolev2}
\ea
for any $u\in W^{1,2}(M)$ with  $\int_M u^2 \dmut=1$,
where  $C_S(M,g_0)$ is the Sobolev constant defined in (\ref{sob}), $\delta_0=\delta_0(g_0)$ is the number defined in
(\ref{delta-0}) and the number
$\sigma_0(g_0)$ is defined in (\ref{sigma-0}).
%
\end{lem}
\begin{proof}
Assume $t+\sigma \ge \frac{n}{8}C_S(M,g_0)^2\delta_0(g_0)$. Choosing
$$
A=\frac{8(t+\sigma)}{nC_S(M,g_0)^2}\geq \delta_0(g_0),
$$
from (\ref{RLS4}), we can deduce
\ba
\int_M u^2 \ln u^2\dmuo &\le& 4(t+\sigma)\int_M \lf(|\nabla u|_{0}^2+\frac{S_{0}}{4}u^2\rt)\dmuo
-\frac{n}{2}\ln (t+\sigma)\nonumber \\
&&+\frac{n}{2}\bigg(2\ln C_S(M,g_0)+\ln n-2\ln 2\bigg)+\sigma_0(g_0),\no
\ea
where
$u \in W^{1,2}(M)$ satisfying
$\int_M u^2\dmuo=1$.

It follows that
\ba\label{muo1}
\mu^*(g_0, t+\sigma) \ge \frac{n}{2}\ln (t+\sigma)-\frac{n}{2}\bigg(2\ln C_S(M,g_0)+\ln n-2\ln 2\bigg)-\sigma_0(g_0).
\ea
From (\ref{mu-monotone}) and (\ref{muo1}), we can deduce
$$
\mu^*(g(t), \sigma)\ge \frac{n}{2} \ln \sigma -\frac{n}{2}\bigg(2\ln C_S(M,g_0)+\ln n-2\ln 2\bigg)-\sigma_0(g_0)
$$
or
$$
\mu^*\lf(g(t), \frac{\sigma}{4}\rt)\ge \frac{n}{2} \ln \sigma -\frac{n}{2}\bigg(2\ln C_S(M,g_0)+\ln n \bigg)-\sigma_0(g_0),
$$
which is equivalent to (\ref{sobolev2}).
%
\end{proof}
\begin{rem}
In the case of Ricci flow (\ref{rf}),  the result in
Lemma \ref{thmlogsob2}  can be found in  Ye \cite{ye1}.
\end{rem}
Note that the proofs of Theorem \ref{thmlogsob1} and Lemma \ref{thmlogsob2} lead to the following general result. Indeed, Theorem \ref{thmlogsob1} and Lemma \ref{thmlogsob2} can be seen as its special examples.
\begin{thm}
Let $g(t)$ be a smooth solution of the geometric flow (\ref{grf}) on $M\times [0,\,T)$ for some (finite or infinite) $T>0$ with $\mathcal{D}_{2}(\mS,\cdot)$ defined in (\ref{con})  nonnegative and let $h(\sigma)$ be
a scalar function for $\sigma>0$.  Assume that
the initial metric $g_0=g(0)$ satisfies the logarithmic Sobolev inequality
$$
\int_M u^2 \ln u^2 \dmuo \le \sigma \int_M \lf(|\nabla u|_{0}^2 + \frac{S_{0}}{4}u^2\rt)\dmuo
+h(\sigma)
$$
for each $\sigma>0$ and all $u\in W^{1,2}(M)$ with $\int_M u^2 \dmuo=1$. Then
there holds at each $t \in [0, T)$
$$
\int_M u^2 \ln u^2 \dmut \le \sigma \int_M \lf(|\nabla u|_{t}^2 + \frac{S_{t}}{4}u^2\rt)\dmut
+h(4t+\sigma)-\frac{n}{2}\ln\frac{\sigma}{4t+\sigma}
$$
for each $\sigma>0$ and all $u\in W^{1,2}(M)$ with $\int_M u^2 \dmut=1$.
\end{thm}
Given Theorem \ref{thmlogsob1} and Lemma \ref{thmlogsob2}, we can deduce Theorem \ref{thmlogsob3}.
\begin{proof}[Proof of Theorem \ref{thmlogsob3}]
Let $t\in [0, T)$ and $\sigma>0$. If $t+\sigma < \frac{n}{8}C_S(M, g_0)^2 \delta_0(g_0)$, we apply (\ref{logsob1}) in Theorem \ref{thmlogsob1} and bound $t+\frac{\sigma}{4}$ in (\ref{logsob1}) by $\frac{n}{8}C_S(M, g_0)^2 \delta_0(g_0)$.
Otherwise, we apply (\ref{sobolev2}) in Lemma \ref{thmlogsob2}. Then we can deduce (\ref{logsob3}).  Since the eigenvalue $\lambda_0(g(t))$ is non-decreasing and $\ld_0(g_0)>0$ we have
$\lambda_0(g(t))>0$ for all $t$. Therefore, we can deduce $\int_M \lf(|\nabla u|_t^2+\frac{S_t}{4}u^2\rt)\dmut>0$ for all $t$.  From Lemma \ref{stronglemma} by setting $a=\int_M \lf(|\nabla u|_t^2+\frac{S_t}{4}u^2\rt)\dmut$ and $b=C$, we can get (\ref{stronglogsob3}).
\end{proof}
Here we give a special conclusion of Theorem \ref{thmlogsob3}.
\begin{cor}\label{corlogsob}
Suppose that $g(t)$ is a smooth solution of the geometric flow (\ref{grf}) on $M\times [0,\,T)$ for some (finite or infinite) $T>0$ with $\mathcal{D}_{2}(\mS,\cdot)$ defined in (\ref{con})  nonnegative. If $\ld_{0}(g_0)$ is positive, then for $t\in[0,\,T)$, we have
\ba\label{fcorlogsob1}
\mathrm{Vol}_{g(t)}(M)\geq e^{ -C}
\ea
when $\hat{S}_t\leq 0$, and
\ba\label{fcorlogsob2}
\mathrm{Vol}_{g(t)}(M)\geq e^{-\frac{1}{4}-C}\hat{S}_t^{-\frac{n}{2}}
\ea
when $\hat{S}_t> 0$. Here $C$ is the constant in Theorem \ref{thmlogsob3} and $\hat{S}_t$ is the average of $S_t$
$$
\hat{S}_t=\frac{\int_M S_{t}  \dmut}{\mathrm{Vol}_{g(t)}(M)} .
$$
\end{cor}
\begin{proof}
Taking $u=\mathrm{Vol}_{g(t)}(M)^{-\frac{1}{2}}$ in (\ref{logsob3}), we get
$$
  \ln \frac{1}{\mathrm{Vol}_{g(t)}(M)} \le \frac{\sigma}{4}\hat{S}_t
-\frac{n}{2}\ln \sigma +C.
$$
If $\hat{S}_t \leq 0$, then taking $\sigma=1$, we get (\ref{fcorlogsob1}). If $ \hat{S}_t > 0$, then taking $\sigma=\hat{S}_t^{-1}$, we get (\ref{fcorlogsob2}).
\end{proof}
\begin{rem}
In the case of Ricci flow (\ref{rf}),  the result in Corollary \ref{corlogsob} specializes to the one in  Ye \cite{ye1}.
\end{rem}
Given the logarithmic Sobolev inequalities in Theorem \ref{thmlogsob1} and Theorem \ref{thmlogsob3}, we can deduce the uniform Sobolev inequality along geometric flow (\ref{grf}).
\begin{proof}[Proof of Theorem \ref{thmsob}]
In the case $\ld_{0}(g_0)>0$, letting
$$
f=\frac{u}{\lf(\intm u^2\dmut\rt)^{\frac{1}{2}}},
$$
from (\ref{stronglogsob3}), we have
\ba
&&\intm\lf[u^2\ln\lf(\frac{u^2}{\intm u^2\dmut}\rt) \dmut\rt]\no\\
&\leq& n\lf(\intm u^2\dmut\rt)\ln\lf(\frac{\alpha_{II}\int_M \lf(|\nabla u|_{t}^2 +\frac{S_{t}}{4} u^2\rt)\dmut}{ \intm u^2 }\rt)^{\frac{1}{2}}\no\\
&\leq& n\lf(\intm u^2\dmut\rt)\ln\lf(\frac{\alpha_{II}\int_M \lf(|\nabla u|_{t}^2 +\lf(\frac{S_{t}-S_0^-}{4}\rt) u^2\rt)\dmut}{ \intm u^2\dmut }\rt)^{\frac{1}{2}} ,\no
\ea
where $S_0^{-}=\min\{0,S_0\}$.
Define
$$
W(f):=\lf\{\int_M \lf[|\nabla u|_{t}^2 +\lf(\frac{S_{t}-S_0^-}{4} \rt) u^2\rt]\dmut\rt\}^{\frac{1}{2}}.
$$
Then from Lemma \ref{lem6} and Lemma \ref{lem7}(by taking $\rho=2,\,p=2,\,s=1,\,q=\frac{2n}{n-2}$), we have
\ba\label{case1sob}
\lf(\intm u^{\frac{2n}{n-2}}\dmut\rt)^{\frac{n-2}{2n}} \leq \lf(2^{\frac{2n}{n-2}}-1\rt)^{\frac{n-2}{2n}}2^{\frac{n+2}{n-2}} \al_{II} W(f) .
\ea
Since
\ba
\intm  u^2\dmut
&=&\frac{\ld_{0}(g(t))}{\ld_{0}(g(t))}\intm  u^2\dmut\no\\
&\leq&\frac{1}{\ld_{0}(g(t))}\intm \lf(|\n u|_t^2+\frac{S_t}{4}u^2\rt) \dmut\no\\
&\leq&\frac{1}{\ld_{0}(g_0)}\intm \lf(|\n u|_t^2+\frac{S_t}{4}u^2\rt) \dmut,\no
\ea
substituting the expression of $\al_{II}$ into (\ref{case1sob}), we have (\ref{sobp}),
where
\ba\label{sobpc}
A=(2^{\frac{2n}{n-2}}-1)^{\frac{n-2}{n}}2^{\frac{4n }{n-2}}  e^{2+\frac{4C}{n}} \frac{\ld_{0}(g_0)-S_0^-/4}{\ld_{0}(g_0)} .
\ea
In case $T<\infty$, define
$$
W(f):=\lf\{\int_M \lf(|\nabla u|_{t}^2 +\frac{S_{t}+A_1}{4} u^2\rt)\dmut \rt\}^{\frac{1}{2}}.
$$
Then from (\ref{stronglogsob1}), Lemma \ref{lem6} and Lemma \ref{lem7}(by taking $\rho=2,\,p=2,\,s=1,\,q=\frac{2n}{n-2}$), we have (\ref{sobi}), where
\ba
A&=&\lf(2^{\frac{2n}{n-2}}-1\rt)^{\frac{n-2}{n}}2^{\frac{4n }{n-2}}\frac{e^2}{n^2}e^{\frac{4(A_1T+A_2)}{n}}, \label{sobia}\\
B&=&\lf(2^{\frac{2n}{n-2}}-1\rt)^{\frac{n-2}{n}}2^{\frac{4n }{n-2}}\frac{A_1e^2}{4n^2}e^{\frac{4(A_1T+A_2)}{n}}.\label{sobib}
\ea
\end{proof}
\section{The $\kappa$-noncollapsing estimates under geometric flow}\label{noncollapsing}
In the case of Ricci flow (\ref{rf}), the $\kappa$-noncollapsing property, the volume ratio between a geodesic ball and Euclidean
ball with the same radius is bounded from below, is first proved by Perelman \cite {perelman} under the assumption that curvature is bounded along the Ricci flow.  Here we get the $\kappa$-noncollapsing estimates as follows.
\begin{thm}\label{thmnoncollapsing}
Assume that $g(x,t)$ is a smooth solution to the geometric flow (\ref{grf}) in $M\times[0,T)$ and $\mathcal{D}_{2}(\mS,\cdot)$ defined in (\ref{con}) is nonnegative. There hold
\begin{enumerate}
\item if $\ld_{0}(g_{0})>0$ and $S_t\leq \frac{1}{r^2}$ holds on a geodesic ball $B(x,r)$, where $r>0$, then  for $t\in [0,\,T)$, there holds
$$
\mathrm{Vol}_{g(t)}(B(x,r))\geq \lf(\frac{1}{2^{n+3}A}\rt)^{\frac{n}{2}} r^n,
$$
where $A$ is a positive constant defined in (\ref{sobpc}).
\item if $T<\infty$ and $S_t\leq \frac{1}{r ^2}$ holds on a geodesic ball $B(x,r)$ with $0<r\leq L$, then for $t\in [0,\,T)$, there holds
$$
\mathrm{Vol}_{g(t)}(B(x,r ))\geq \lf(\frac{1}{2^{n+3}A+2L^2B}\rt)^{\frac{n}{2}} r ^n,
$$
where $A$ and $B$ are defined in (\ref{sobia}) and (\ref{sobib}) respectively.
\end{enumerate}
\end{thm}
\begin{rem}
In the case of Ricci flow, this version of the $\kappa$-noncollapsing property can be found in \cite{ye1}, which is particularly powerful and flexible and has important applications to Poincar\'{e} conjecture and the geometrization conjecture \cite{p2}. (More meanings and applications of the $\kappa$-noncollapsing property can be found in \cite{ye1} and the references therein.)
\end{rem}
The proof of Theorem \ref{thmnoncollapsing} is a direct result of the following lemma.
\begin{lem}
Let $(M,\,g)$ be an $n$-dimensional ($n\geq 3$) compact Riemannian manifold and $\mS$ be any symmetric $2$-tensor with trace $S=\suml_{i,j=1}^ng^{ij}\mS_{ij}$. Assume that for any $u\in W^{1,2}(M)$, there holds the Sobolev inequality
$$
\lf(\intm |u|^{\frac{2n}{n-2}}\dmu \rt)^{\frac{n-2}{n}}\leq A\intm\lf( |\n u | ^2+\frac{S }{4}u^2\rt)\dmu +
B\intm u^2\dmu .
$$
If $S\leq \frac{1}{r^2}$ holds on a geodesic ball $B(x,r)$ with $0<r\leq L$, then there holds
$$
\mathrm{Vol}_{g }(B(x,r ))\geq \lf(\frac{1}{2^{n+3}A+2L^2B}\rt)^{\frac{n}{2}} r ^n.
$$
\end{lem}
\begin{proof}
The proof is very similar to the proof of Lemma 6.1 in \cite{ye1}. Here we omit it.
\end{proof}
\section{The $\kappa$-noninflated estimates under geometric flow}\label{sectionnoninflated}
Except for $\kappa$ non-collapsing property, the $\kappa$-noninflated property (the volume ratio between a geodesic ball and Euclidean
ball with the same radius is bounded from above) is also very useful (in the case of K\"{a}hler-Ricci flow, the importance of upper bound of volume can be found in \cite{sesum,chenwang1} and references therein).

To make the $\kappa$ non-inflated property clear, we give a definition as follows.
\begin{defn}
A smooth, compact, $n$-dimensional geometric flow (\ref{grf}) is called $\kappa$ non-inflated at the point $(x_{0},\,t_{0})$ under scale $\rho$
if the following statement holds.
\begin{enumerate}
\item the geometric flow is defined in the space time cube
$$
\bigg\{(x,\,t):\md(x,x_{0},t_{0})<r,\;t\in\,[t_{0}-r^2,\,t_{0}]\bigg\},
$$
\item
for some positive constant $\al$, $S(x,t)\leq \frac{\al}{t_{0}-t}$ for all $(x,t)$ in the above cube.
\end{enumerate}
Then there exists a positive constant $\kappa$, which may depend on $\al$ such that
$$
 \mathrm{Vol}_{g(t_{0})}(\mathrm{B}(x_{0},r,t_{0}))\leq \kappa r^n.
$$
\end{defn}
\begin{rem}
In the $\kappa$ non-collapsing property, the condition $S(x,\,t)\leq \frac{1}{r^2}$ on the $B(x,\,t)$ is included in the one $S(x,t)\leq \frac{\al}{t_{0}-t}$ of  the $\kappa$ non-inflated property in the same space time cube.

In the case of Ricci flow (\ref{rf}), our definition is the same as the one in Zhang \cite{zhangqi3}.
\end{rem}
\begin{thm}\label{nonflated}
Assume that $g(x,t)$ is a smooth solution to the geometric flow (\ref{grf}) in $M\times[0,T)$ and $\mathcal{D}_{2}(\mS,\cdot)$ defined in (\ref{con}) and $ Ric - \mathcal{S}$ are nonnegative.
For any $x_0\in M$, the geometric (\ref{grf}) is $\kappa$ non-inflated at $(x_0,\,t_0)$ under scale $\sqrt{t_0}$, where $\kappa$ defined in (\ref{kappadef}) depends only on $g_0,\,t_0$ and $\al$.
\end{thm}
\begin{rem}
The $\kappa$ non-inflated property in Theorem \ref{nonflated} specializes to the one in Zhang \cite{zhangqi3} in the case of Ricci flow (\ref{rf}).
\end{rem}
In order to prove the $\kappa$ non-inflated property of geometric flow (\ref{grf}), we need the lemmas as follows.

Let $g(x,t)$ be a solution to the geometric flow (\ref{grf}) on $M\times [0,\,T)$, where $M$ is  a compact manifold and let $\ell,\,t$ be two moments in time such that $0<\ell<t<T$, and $x,\,z\in M$. Let $G=G(z,\ell;x,t)$ be the fundamental solution of the conjugate heat equation
$$
\de_{\ell}f(z,\ell)+\Delta_{g(z,\ell)} f(z,\ell)-S(z,\ell) f(z,\ell)=0.
$$
along the geometric flow (\ref{grf}). Fixing $z,\ell$, we know that $G$, as a function of $x$  and $t$, is the fundamental solution of heat equation (see for example Lemma 26.3 of Chapter 26 in \cite{chow})
\be\label{heg}
\de_{t}h(x,t)-\Delta_{g(x,t)}h(x,t)=0.
\ee
\begin{lem}
Assume that $g(x,t)$ is a smooth solution to the geometric flow (\ref{grf}) in $M\times[0,T)$ and that $\mathcal {D}_2( \mathcal{S},\cdot)$ defined in (\ref{con}) is nonnegative. We have
\be\label{gshangjie1}
\int_{M} G(z,\ell;x,t)\md\mu(x,t)\leq1+C(1+t-\ell)^{\frac{n}{2}},
\ee
where $C$ only depends on $\min\limits_{x\in M} S(x,0) $. In particular,
$
C=0
$
when
$$
S(x,t)\geq \min\limits_{x\in M}  S(x,0)\geq0.
$$
\end{lem}
\begin{proof}
Since
\ba\label{kd}
&&\frac{\md}{\md t}\int_{M} G(z,\ell;x,t)\md\mu(x,t)\nonumber\\
&=&\int_{M}\bigg[\Delta_{x}G(z,\ell;x,t)-S(x,t) G(z,\ell;x,t)\bigg]\md\mu(x,t)\nonumber\\
&=&-\int_{M} S(x,t) G(z,\ell;x,t) \md\mu(x,t),
\ea
from (\ref{s1}), (\ref{s2}) and (\ref{kd}), we have either
$$
\frac{\md}{\md t}\int_{M} G(z,\ell;x,t)\md\mu(x,t)\leq 0
$$
or
$$
\frac{\md}{\md t}\int_{M} G(z,\ell;x,t)\md\mu(x,t)\leq
\frac{ \int_{M} G(z,\ell;x,t)\md\mu(x,t)}{-\frac{1}{\min\limits_{x\in M}  S(x,0)}+\frac{2t}{n}}.
$$
Finally, we can deduce (\ref{gshangjie1}).
\end{proof}
\begin{lem}
Assume that $g(x,t)$ is a smooth solution to the geometric flow (\ref{grf}) in $M\times[0,T)$ and that $\mathcal {D}_2( \mathcal{S},\cdot)$ defined in (\ref{con}) is nonnegative. We have
\ba\label{gshangjie2}
 G(z, \ell; x, t) \leq \frac{\exp[L(t)   -t\inf\limits_{y\in M}  S^{-}(y,0)]}{(4  (t-\ell))^{\frac{n}{2}}},
\ea
where $0<\ell<t$ and
$$
L(t)=2A_{1}t+A_{2},
$$
with $A_1,\,A_2$ the same as the ones defined in (\ref{a1}) and (\ref{a2}) up to adding   constants depending only on $n$.

Moreover, if $S(x,\,0)\geq 0$, we have
\ba\label{gshangjie2'}
 G(z, \ell; x, t) \leq \frac{e^C}{(4  (t-\ell))^{\frac{n}{2}}},
\ea
where $C$ is the same as the ones defined in (\ref{logsob3}) up to   adding   constants depending only on $n$.
\end{lem}
\begin{proof}
Let $f=f(x,\,t)$ be a positive solution to (\ref{heg}). Give $T_{0}>\ell$ and $t\in (\ell,\,T_{0})$, defining
$$
p(t)=\frac{T_{0}-\ell}{T_{0}-t},
$$
we have $p(\ell)=1$ and $ p(T_{0})=+\infty$.

Applying  the idea of Davies, we have
$$
\bag
\de_{t}\|f\|_{p(t)}
=&\de_{t}\lf[\lf(\int_{M}f^{p(t)} \md\mu(x,t)\rt)^{\frac{1}{p(t)}}\rt]\\
=&-\frac{p'(t)}{p^2(t)}\|f\|_{p(t)}\ln\int_{M}f^{p(t)} \md\mu(x,t)+\frac{1}{p(t)}\lf(\int_{M}f^{p(t)} \md\mu(x,t)\rt)^{\frac{1}{p(t)}-1}\\
&\times\lf[\int_{M}f^{p(t)}(\ln f)p'(t)\md\mu(x,t)\rt.\\
&\lf.\quad\quad+\int_{M}f^{p(t)-1}(p(t)\Delta_{x} f(x,t)-f(x,t)S(x,t))\md\mu(x,t)\rt]
\eag
$$
multiplying both sides by $p(t)^2\|f\|_{p(t)}^{p(t)-1}$, we can deduce
$$
\bag
p(t)^2\|f\|_{p(t)}^{p(t)-1}\de_{t}\|f\|_{p(t)}
=&- p'(t)\|f\|_{p(t)}^{p(t)} \ln\int_{M}f^{p(t)} \md\mu(x,t) \\
&+p(t)p'(t)   \int_{M}f^{p(t)} (\ln f)\md\mu(x,t) \\
&-4(p(t)-1)\int_{M}\lf|\n\lf (f^{\frac{p(t)}{2}}\rt)\rt|^2\md\mu(x,t) \\
&-p(t)\int_{M} \lf(f^{\frac{p(t)}{2}}\rt)^2S(x,t)\md\mu(x,t)
\eag
$$
Define $v(x,\,t)=\frac{f^{\frac{p(t)}{2}}}{\lf(\int_{M}f^{p(t)}\md\mu(x,t)\rt)^{\frac{1}{2}}}$. Then we have
$$
\bag
\|v\|_{2}=&1,\\
\int_{M}v^{2}\ln v^{2}
=&p(t)\int_{M}v^2\ln f-2\int_{M}v^2\ln\|f^{\frac{p(t)}{2}}\|_{2}\\
=&-2\ln \|f^{\frac{p(t)}{2}} \|_{2}+p(t)\int_{M}v^2\ln f.
\eag
$$
Dividing both sides by $\|f\|_{p(t)}^{p(t)}$, we have
\be
\bag
&p^{2}(t)\de_{t}\ln \|f\|_{p(t)}\\
=&p'(t)\int_{M}v^2\ln v^2\md\mu(x,t)-4(p(t)-1)\int_{M}|\n v|^2\md\mu(x,t)\\
&-p(t)\int_{M}S(x,t) v^2\md\mu(x,t)\\
=&p'(t)\int_{M}v^2\ln v^2\md\mu(x,t)
- \int_{M}S(x,t) v^2\md\mu(x,t)\\
&-4(p(t)-1)\int_{M}\lf(|\n v|^2+\frac{1}{4}S(x,t) v^2\rt)\md\mu(x,t)\\
\eag
\ee
From the Cauchy-Schwarz inequality, we have
$$
\bag
\frac{4(p(t)-1)}{p'(t)}
= &\frac{4(t-\ell)(T_{0}-t)}{T_{0}-\ell}\\
\leq&\frac{ (T_{0}-t+t-\ell)^2}{T_{0}-\ell}\\
=&T_{0}-\ell,\\
\frac{1}{p'(t)}
=&\frac{(T_{0}-t)^2}{T_{0}-\ell}\leq T_{0}-\ell.
\eag
$$
Therefore, we have
\be
\bag\label{inequalitysbig}
&p^{2}(t)\de_{t}\ln \|f\|_{p(t)}\\
=&
p'(t)\Bigg[
\int_{M}v^2\ln v^2\md\mu(x,t)- \frac{1}{p'(t)}\int_{M}S(x,t) v^2\md\mu(x,t)\\
&\quad\quad-\frac{4(p(t)-1)}{p'(t)}\int_{M}\lf(|\n v|^2+\frac{1}{4}S(x,t)  v^2\rt)\md\mu(x,t)
\Bigg]\\
\leq&
p'(t)\Bigg[
\int_{M}v^2\ln v^2\md\mu(x,t)-(T_0-\ell)\inf\limits_{x\in M}  S^{-}(x,t)  \\
&\quad\quad\quad-\frac{4(p(t)-1)}{p'(t)}\int_{M}\lf(|\n v|^2+\frac{1}{4}S(x,t) v^2\rt)  \md\mu(x,t)
\Bigg].
\eag
\ee
Taking
$$
\sigma=\frac{4(p(t)-1)}{p'(t)}\leq T_{0}-\ell
$$
in (\ref{logsob1}),  we can deduce
$$
p^{2}(t)\de_{t}\ln \|f\|_{p(t)}
\leq p'(t)\lf(-n\ln\sqrt{\frac{4(p(t)-1)}{p'(t)}}+L(T_0)-(T_0-\ell)\inf\limits_{x\in M}  S^{-}(x,0)\rt)
$$
where, since $\sigma\leq T_{0}-\ell\leq T_{0}$,
$$
A_{1}\lf(t+\frac{\sigma}{4}\rt)+A_{2}
 \leq A_{1}T_{0}+A_{2}
 = L(T_{0})
$$
and we also make use of   (\ref{parabolic})   to obtain
$$
-\inf\limits_{x\in M}  S^{-}(x,t) \leq-\inf\limits_{x\in M}  S^{-}(x,0).
$$

Since
$$
\frac{p'(t)}{p^2(t)}=\frac{1}{T_{0}-\ell}
$$
and
$$
\frac{4(p(t)-1)}{p'(t)}=\frac{4(t-\ell)[T_{0}-\ell-(t-\ell)]}{T_{0}-\ell},
$$
we can deduce
\ba
\de_{t}\ln \|f\|_{p(t)}
&\leq &\frac{1}{T_{0}-\ell}\Bigg\{-\frac{n}{2}\ln\lf[\frac{4(t-\ell)[T_{0}-\ell-(t-\ell)]}{T_{0}-\ell}\rt]\no\\
&&\quad\quad\quad\quad+L(T_{0})-(T_0-\ell)\inf\limits_{x\in M}  S^{-}(x,0)\Bigg\}.\no
\ea
Integrating from $t=\ell$ to $t=T_{0}$, we can get
$$
\ln \frac{\|f(\cdot,T_{0})\|_{\infty}}{\|f(\cdot,\ell)\|_{1}}\leq -\frac{n}{2}\ln[4(T_{0}-\ell)]+L(T_{0})-(T_0-\ell)\inf\limits_{x\in M}  S^{-}(x,0)+n.
$$
Since
 $$
 f(x, T_{0}) = \int_{  M}  G(z, \ell; x, T_{0}) f(z, \ell) \md\mu(z,\ell),
 $$
the above inequality implies that
$$
 G(z, \ell; x, T_{0}) \leq \frac{\exp[L(T_{0})   -(T_{0}-\ell)\inf\limits_{x\in M}  S^{-}(x,0)+n]}{(4  (T_{0}-\ell))^{\frac{n}{2}}}.
$$
Since $T_{0}>\ell$ is arbitrary, we get (\ref{gshangjie2}) with  maybe modified constants $A_1$ and $A_2$.

If $S(x,\,0)\geq 0$, then we can use the logarithmic Sobolev inequality (\ref{logsob3}) in (\ref{inequalitysbig}). Therefore, we can deduce (\ref{gshangjie2'}) with a modified constant.
\end{proof}
\begin{rem}
We can also prove this lemma by Moser's Iteration. Here we follow \cite{jiang} and just sketch it.

For $p\geq 1$, we have
$$
\intm f^pf_t\dmut-\intm f^p\Delta f\dmut=0,
$$
that is,
$$
\frac{1}{p+1}\de_t\intm f^{p+1}\dmut+\frac{1}{p+1} \intm S_t f^{p+1}\dmut +\frac{4p}{(p+1)^2}\intm |\n f^{\frac{p+1}{2}}|^2\dmut=0,
$$
where we use the Stokes' theorem and that
$
\de_t \dmut=-S_t\dmut.
$
Since $p\geq 1$, we have $4p\geq 2(p+1)$. Therefore, we can deduce
\ba\label{moser1}
\de_t\intm f^{p+1}\dmut&+&\intm \lf(S_t+C_0\rt)f^{p+1}\dmut+2\intm |\n f^{\frac{p+1}{2}}|^2\dmut\no\\
&\leq& C_0 \intm f^{p+1}\dmut,
\ea
where
$$
C_0=\lf\{
\bag
0,\quad \min_{x}S_0\geq 0,\\
\frac{4}{  C_S(M, g_0)^2\mathrm{Vol}_{g_0}(M)^{\frac{2}{n}}}-\min S_{0} ,\quad \min_{x}S_0< 0.
\eag\rt.
$$
Define
$$
\eta(t)=\lf\{\bag
&0,&\quad 0\leq t\leq \tau T,\\
&\frac{t-\tau T}{(\theta-\tau)T},&\quad \tau T\leq t\leq \theta T,\\
&1,&\quad\theta T\leq t\leq T.
\eag\rt.
$$
Multiplying (\ref{moser1}) by $\eta(t)$, we can deduce
\ba
\de_t\lf(\eta(t)\intm f^{p+1}\dmut\rt)&+&\frac{1}{2}\eta(t)\lf(\intm \lf(S_t+C_0 \rt)f^{p+1}\dmut+4\intm |\n f^{\frac{p+1}{2}}|^2\dmut\rt)\no\\
&\leq&\lf( C_0 +\eta'(t)\rt) \intm f^{p+1}\dmut.\no
\ea
Integrating this with respect to $t$ gives
\ba
\sup_{\theta T\leq t\leq T}\intm f^{p+1}\dmut &+&2 \lf\{\int_{\theta T}^{T}\intm \lf(|\n f^{\frac{p+1}{2}}|^2 +\frac{S_t+C_0}{4} f^{p+1}\rt)\dmut  \md t\rt\}\no\\
&\leq& 2\lf(\frac{1}{(\theta-\tau)T}+C_0\rt) \int_{\tau T}^{T}\intm f^{p+1}\dmut\md t.\no
\ea
From Lemma \ref{sjie}, we know that $S_t+C_0\geq 0$. From the proof of Theorem \ref{thmsob}, we can have the Sobolev inequality
\ba\label{case1sobrem}
\lf(\intm u^{\frac{2n}{n-2}}\dmut\rt)^{\frac{n-2}{ n}} \leq A  \int_M \lf[|\nabla u|_{t}^2 +\frac{S_t+C_0}{4} u^2\rt]\dmut,
\ea
where
\ba\label{sobpcrem}
A=\lf\{\bag
\lf(2^{\frac{2n}{n-2}}-1\rt)^{\frac{n-2}{n}}2^{\frac{4n }{n-2}}\lf[C_S(M,g_0)\rt]^4 e^{2+\frac{4}{n}\sigma_0(g_0)},& \quad  \inf_{x\in M}S_0 \geq 0,\\
\frac{1}{n^2}\lf(2^{\frac{2n}{n-2}}-1\rt)^{\frac{n-2}{n}}2^{\frac{4n }{n-2}}e^{2+\frac{4(A_1t+A_2)}{n}},& \quad \inf_{x\in M}S_0<0.
\eag\rt.
\ea
By making use of the Sobolev inequality above, we can get
\ba
 &&\int_{\theta T}^T\int_M f^{(p+1)\lf(1+\frac{2}{n}\rt)}\dmut\md t\no\\
&\le&\int_{\theta T}^T\left(\int_M f^{p+1}d\mu(t)\right)^{\frac{2}{n}}\left(\int_M f^{(p+1)\frac{n}{n-2}}\dmut\right)^{\frac{n-2}{n}}\md t\no\\
&\le& \sup_{\theta T \le t\le T}\left(\int_M f^{p+1}\dmut\right)^{\frac{2}{n}}A\int_{\theta T}^{T}\left(\int_M\lf[(S_t+C_0)f^{p+1}d\mu(t)+4|\nabla f^{\frac{p+1}{2}}|^2\rt]\dmut\right)\md t\no\\
&\le& 4A\lf[ C_0+\frac{1}{(\theta-\tau)T}\rt]^{1+\frac{2}{n}}\left(\int_{\tau T}^{T}\int_M f^{p+1}\dmut\md t\right)^{1+\frac{2}{n}}.\no
\ea
For $p\geq 2,\,0<\tau<1$, Set
$$
H(p,\,\tau):=\left(\int_{\tau T}^{T}\int_M f^{p}\dmut\md t\right)^{\frac{1}{p}},\quad \chi=\frac{n+2}{n}.
$$
Then for $0<\tau<\theta<1$, we have
\be\label{bfo16}
H(p\chi,\,\theta)\leq (4A)^{\frac{1}{p\chi}}\lf(C_0+\frac{1}{(\theta-\tau)T}\rt)^{\frac{1}{p}}H(p,\,\tau).
\ee
For $p_0\geq 2$ fixed, defining
$$
\bag
\gamma_{i}=&p_{0}\chi^{i-1},\quad \theta_{i}=\theta-\frac{\theta-\tau}{2^{i-1}} ,
\eag
$$
from (\ref{bfo16}), we have
\ba
H(\gamma_{k+1},\,\theta_{k+1})
&\leq&(4A)^{\frac{1}{p_0\chi^{k }}}\lf(C_0+\frac{2^k}{(\theta-\tau)T}\rt)^{\frac{1}{p_0\chi^{k-1}}}H(\gamma_{k},\,\theta_{k}).\no
\ea
By iteration, we can deduce
\ba
H(\gamma_{k+1},\,\theta_{k+1})
&\leq&(4A)^{\frac{1}{p_0}\sum\limits_{\ell=1}^k\frac{1}{\chi^{\ell }}}2^{\frac{1}{p_0}\sum\limits_{\ell=1}^k\frac{\ell}{\chi^{\ell-1}}}\no\\
&&\lf(C_0+\frac{1}{(\theta-\tau)T}\rt)^{\frac{1}{p_0}\sum\limits_{\ell=1}^k\frac{1}{\chi^{\ell-1}}}H(\gamma_{1},\,\theta_{1}).\no
\ea
Letting $k\longrightarrow +\infty$, we have
\ba
\sup_{(x,t)\in M\times [\theta T,\,T]}|f(x,\,t)|
&\leq&(4A)^{\frac{n+2}{2p_0}}2^{\frac{(n+2)^2}{4p_0}}\lf(C_0+\frac{1}{(\theta-\tau)T}\rt)^{\frac{n+2}{2p_0}}\no\\
&&\left(\int_{\tau T}^{T}\int_M f^{p_0}\dmut\md t\right)^{\frac{1}{p_0}}.\no
\ea
For $0<p<2$, we set
$$
h(\tau)=\sup_{(x,t)\in M\times [\tau T,\,T]}|f(x,\,t)| .
$$
Then from the Young's inequality, we can get
$$
h(\theta)\leq \frac{1}{2}h(\tau)+\frac{p}{2}\lf[2^{\frac{(n+2)^2}{8}}(2-p)\rt]^{\frac{2-p}{p}}
(4A)^{\frac{n+2}{2p }} \lf(C_0+\frac{1}{(\theta-\tau)T}\rt)^{\frac{n+2}{2p }}\left(\int_{\tau T}^{T}\int_M f^{p}\dmut\md t\right)^{\frac{1}{p}}
$$
Then from Lemma 4.3 in \cite{hanlin}, we get
\be\label{moser4}
h(\theta)\leq C
 A^{\frac{n+2}{2p }} \lf(C_0+\frac{1}{(\theta-\tau)T}\rt)^{\frac{n+2}{2p }}\left(\int_{\tau T}^{T}\int_M f^{p}\dmut\md t\right)^{\frac{1}{p}},
\ee
where $C$ is a constant depending only on $n$ and $p$.

Taking $p=1$ in (\ref{moser4}), from (\ref{gshangjie1}), we can get the estimates in the form of (\ref{gshangjie2}) and (\ref{gshangjie2'}).
\end{rem}
\begin{lem}
Assume that $g(x,t)$ is a smooth solution to the geometric flow (\ref{grf}) in $M\times[0,T)$ and that $\mathcal {D}_2( \mathcal{S},\cdot)$ defined in (\ref{con}) is nonnegative. For $0\leq\ell< t<T$ and any point $x$, we have
\ba\label{gxiajie1}
G(x,\ell;x,t)\geq \frac{1}{(4\pi(t-\ell))^{\frac{n}{2}}}e^{-\frac{1}{2\sqrt{t-\ell}}\int_{\ell}^{t}\sqrt{t-s}S(x,s)\md s}.
\ea
\end{lem}
\begin{proof}
For fixed $(x,t)$, consider $G(z,\ell;x,t)$ as a function of $(z,\ell),\;0\leq \ell< t$. Define $h(z,\ell)$ by
$$
G(z,\ell;x,t)=\frac{e^{-h(z,\ell)}}{(4\pi(t-\ell))^{\frac{n}{2}}}.
$$
Then we have
\ba\label{hequ}
\de_{\ell}h(z,\ell)+\Delta_{g(z,\ell)}h(z,\ell)-|\n h|_{g(z,\ell)}^2+S(z,\ell)-\frac{n}{2(t-\ell)}=0.
\ea
If $\mathcal {D}_2( \mathcal{S},\cdot)$ is nonnegative, Cao, Guo and Tran \cite{caoxiaodong} proved
\ba\label{cao}
(t-\ell)\bigg(2\Delta_{g(z,\ell)}h(z,\ell)-|\n h|_{g(z,\ell)}^2+S(z,\ell)\bigg)+h(z,\ell)-n\leq0.
\ea
From (\ref{hequ}) and (\ref{cao}), we have
$$
-\de_{\ell}h(z,\ell)\leq\frac{1}{2}S(z,\ell)-\frac{1}{2}|\n h|_{g(z,\ell)}^2-\frac{h(z,\ell)}{2(t-\ell)}.
$$
Thus, for any smooth
curve $\gamma(\ell)$, we have
\ba\label{diffha}
-\frac{\md}{\md \ell} h(\gamma(\ell),\ell)\leq\frac{1}{2}\bigg(S(\gamma(\ell),\ell)+|\dot{\gamma}(\ell)|_{g(\gamma(\ell),\ell)}^2\bigg)-\frac{h(\gamma(\ell),\ell)}{2(t-\ell)}.
\ea
Taking $\gamma(\ell)\equiv x$, integrating from $\ell=t_{2}$ to $\ell=t_1$, we have
$$
h(x,t_{2})\sqrt{t-t_2}\leq h(x,t_1)\sqrt{t-t_1}+\frac{1}{2}\int_{t_2}^{t_1}S(x,\ell)\sqrt{t-\ell}\md \ell,
$$
where $0\leq t_2< t_1\leq t$.

From Theorem 24.21 in \cite{chow}, we know that
$
\lim\limits_{t_1 \nearrow t}(t-t_1)^{\frac{n}{2}}G(x,t_1;x,t)
$
is bounded. Thus, for any $0\leq\ell< t$, we have
$$
h(x,\ell)\leq \frac{1}{2\sqrt{t-\ell}}\int_{\ell}^{t}\sqrt{t-s}S(x,s)\md s.
$$
Therefore, we can deduce (\ref{gxiajie1}).
\end{proof}
\begin{lem}\label{lemgxiajieyong}
Assume that $g(x,t)$ is a smooth solution to the geometric flow (\ref{grf}) in $M\times[0,T)$ and that $\mathcal {D}_2( \mathcal{S},\cdot)$ defined in (\ref{con}) and $ Ric - \mathcal{S}$ are nonnegative. Let
$u(x,t)$ be the positive solution of
$$
\frac{\de}{\de t}u(x,t)=\Delta_{g(x,t)}u(x,t).
$$
Then,
for $\delta>0$ and any  $x,\,y \in  M$, we have
\begin{equation}\label{PI2}
u(x,t)\leq U^{\frac{\delta}{1+\delta}}[u(y,t)]^{\frac{1}{1+\delta}}e^{\frac{\dist^2(x,y,t)}{4(t-s_0)\delta}},
\end{equation}
where $U=\sup\limits_{(x,s)\in M\times[s_0,t]}u(x,s) $.
\end{lem}
\begin{proof}
By Theorem 2.2 in \cite{fang}, we know that for any $0\leq s_0< t $ and $s\in[s_0,t ]$
\begin{equation}\label{PI1}
\frac{|\nabla u(x,s)|}{u(x,s)}\leq \sqrt{\frac{1}{s-s_0}}\sqrt{\ln \frac{U}{u(x,s)}}.
\end{equation}
Set
$
\psi(x,s)=\ln\frac{U}{u(x,s)},
$
then inequality (\ref{PI1}) yields
\[\lf|\nabla \sqrt{\psi(x,s)}\rt|=\frac{1}{2}\lf|\frac{\nabla u}{u \sqrt{\psi}}\rt|\leq\frac{1}{\sqrt{4(s-s_0)}}.\]
Next, for any $x,y\in M$, let $\gamma :[0,1]\rightarrow M$ be a minimizing geodesic such that $\gamma(0)=x$ and $\gamma(1)=y$. Integrating the above inequality along the geodesic, we get
$$
\sqrt{\ln\frac{U}{u(y,s)}}\leq\sqrt{\ln\frac{U}{u(x,s)}}+\frac{\dist(x,y,s)}{\sqrt{4(s-s_0)}},
$$
Thus, for any $\delta >0$, we have
\begin{eqnarray*}
\ln\frac{U}{u(y,s)}&\leq&\ln\frac{U}{u(x,s)}+\frac{\dist^2(x,y,s)}{4(t-s_0)}+\sqrt{\ln\frac{U}{u(x,s)}}\frac{\dist(x,y,s)}{\sqrt{s-s_0}}\nonumber\\
&\leq&\ln\frac{U}{u(x,s)}+\frac{\dist^2(x,y,s)}{4(s-s_0)}+\delta\ln\frac{U}{u(x,s)}+\frac{\dist^2(x,y,s)}{4(s-s_0)\delta}.
\end{eqnarray*}
Taking exponential of both sides in the above inequality and taking $s=t$, we gives (\ref{PI2}).
\end{proof}
\begin{lem}
Assume that $g(x,t)$ is a smooth solution to the geometric flow (\ref{grf}) in $M\times[0,T)$ and that $\mathcal {D}_2( \mathcal{S},\cdot)$ defined in (\ref{con}) and $ Ric - \mathcal{S}$ are nonnegative. We have
\ba\label{gxiajie2}
G(z,\ell;y,t)\geq \frac{c_1J(t)}{(t-\ell)^{\frac{n}{2}}}e^{-\frac{\dist^2(z,y,t)}{t-\ell}}e^{-\frac{1}{\sqrt{t-\ell}}\int_{\ell}^{t}\sqrt{t-s}S(z,s)\md s},
\ea
where $c_1$ depends only on $n$.
\end{lem}
\begin{proof} Set
$$
u(x,t)=G(z,\ell;x,t),\quad s_0=\frac{\ell+t}{2},\quad K=\sup\limits_{M\times[\frac{t+\ell}{2},\,t]}G(z,\ell;\cdot,\cdot),
$$
From Lemma \ref{lemgxiajieyong}, we have
\ba\label{351}
G(z,\ell;z,t)
&\leq& K^{\frac{\delta}{1+\delta}}[G(z,\ell;y,t)]^{\frac{1}{1+\delta}}e^{\frac{\dist^2(z,y,t)}{2(t-\ell)\delta}}.
\ea
Using (\ref{gshangjie2}), we know that
$$
K\leq  \frac{\exp[L(t)   -(t-\ell)\inf\limits_{y\in M}  S^{-}(y,0)]}{(4  (t-\ell))^{\frac{n}{2}}}.
$$
Denote $\exp[-L(t)   +(t-\ell)\inf\limits_{y\in M}  S^{-}(y,0)]=J(t)$. Then taking $\delta=1$ in (\ref{351}), from (\ref{gxiajie1}), we have (\ref{gxiajie2}).
\end{proof}
\begin{proof}[Proof of Theorem \ref{nonflated}]
Picking any $r\in (0,\,\sqrt{t_{0}})$, we consider geometric flow (\ref{grf}) in the space time cube
$$
Q(x_{0},t_{0},r)=\lf\{(x,s)|\dist(x,x_{0},t_{0})<r,\quad s\in[t_{0}-r^2,\,t_{0}]\rt\}.
$$
For $r \in(0, \sqrt{t_0})$ and $x\in M$ with  $\dist(x_0, x, t_0) \le r$, from (\ref{gxiajie2}), we have
\ba\label{gxiajie3}
G(x_0, t_0-r^2; x, t_0)
&\ge& \frac{c_1 J(t_0)}{r^n}
e^{-1 }  e^{- \frac{1}{r}
\int^{t_0}_{t_0 - r^2} \sqrt{t_0-s}
 S(x_0, s) \md s}\no\\
 &\ge& \frac{c_1 J(t_0)}{r^n}
e^{-1 }  e^{- \frac{1}{r}
\int^{t_0}_{t_0 - r^2} \sqrt{t_0-s}
 \frac{\alpha}{t_0-s} \md s}\no\\
&=&\frac{c_1 J(t_0)}{r^n} e^{-1 - 2
\alpha}.
\ea
From (\ref{gshangjie1}) and (\ref{gxiajie3}), we deduce
\ba
1+C (1+r^2)^{\frac{n}{2}}
&\ge& \int_{M} G(x_0, t_0-r^2; x, t_0) \md\mu(x,t_0) \no\\
&\ge& \int_{\dist(x_0, x, t_0) \le r} G(x_0, t_0-r^2; x, t_0) \md\mu(x,t_{0}) \no\\
&\ge& \frac{c_1 J(t_0)}{r^n} e^{-1 - 2 \alpha} \int_{\dist(x_0, x,t_0) \le r}  \md\mu(x,t_0).\no
\ea
This implies
$$
\mathrm{Vol}_{g(t_0)}(B(x_0, r, t_0)) r^{-n} \le \frac{\lf[1+ C (1+t_0)^{\frac{n}{2}}\rt] e^{1 + 2 \alpha}}{cJ(t_0)}.
$$
Taking
\ba\label{kappadef}
\kappa=\frac{\lf[1+ C (1+t_0)^{\frac{n}{2}}\rt] e^{1 + 2 \alpha}}{cJ(t_0)},
\ea
we
obtain
$$
\mathrm{Vol}_{g(t_0)}(B(x_0, r, t_0)) \le \kappa   r^n.
$$
\end{proof}
\section{Applications}\label{app}
In this section, we will give some examples of the geometric flow (\ref{grf}).
First, we will consider the Lorentzian mean curvature flow(see \cite {holder,muller} and references therein).

Let $M^n$ be a closed $n$-dimensional spacelike hypersurface in an ambient Lorentzian manifold $L^{n+1}$ and let $F_0:\,M^n\longrightarrow L^{n+1}$ be a smooth immersion of $M^n$ into $L^{n+1}$. Consider a smooth one parameter family of immersions
$$
F(\cdot,\,t):\;M^n\longrightarrow L^{n+1}
$$
satisfying $F(\cdot,0)=F_0(\cdot)$ and
$$
\frac{\de F(p,\,t)}{\de t}=H(p,\,t)\nu(p,\,t),\quad \forall\;(p,\,t)\in\,M\times[0,\,T),
$$
where $H(p,\,t)$ and $\nu(p,\,t)$ denote the mean curvature and the future-oriented timelike normal vector for the hypersurface $M_t=F(M^n,\,t)$
at $F(p,\,t)$, respectively. It is easy to see that the induced metric solves the equation
\ba\label{mcf}
\frac{\de}{\de t}g_{ij} =2HA_{ij},
\ea
where $A=(A_{ij})$ is the second fundamental form on $M_t$.
\begin{thm}\label{thmmcf}
Let $L^{n+1}$ be the ambient Lorentzian manifold with nonnegative sectional curvature. Then for evolution (\ref{mcf}), Theorem \ref{thmlogsob1},
Theorem \ref{thmlogsob3}, Theorem \ref{thmsob},
Lemma \ref{thmlogsob2}, Corollary \ref{corlogsob}, Theorem \ref{thmnoncollapsing} and Theorem \ref{nonflated} hold.
\end{thm}
\begin{proof}
In this setting, we have $\mS_{ij}=-HA_{ij}$ and $S=-H^2$. Marking the curvature with respect to the ambient Lorentzian manifold $L^{n+1}$
with a bar, we have the Gauss equation
$$R_{ij}=\overline{R}_{ij}-HA_{ij}+A_{i\ell}A_{\ell j}+\overline{R}_{i0j0},$$
the Codazzi equation
$$\nabla_iA_{jk}-\nabla_kA_{ij}=\overline{R}_{0jki},$$
and the evolution equation for the mean curvature
$$\frac{\partial H}{\partial t}=\Delta H-H(|A|^2+\overline{Ric}(\nu,\nu)),$$
where $\nu$ denotes the future-oriented timelike normal vector, represented by 0 in the index-notation. Using the three identities above, we get
$$\mathcal {D}_2( \mathcal{S},X)= 2|\nabla H-A(X,\cdot)|^2+2\overline{Ric}(H\nu-X,H\nu-X)+2\langle \overline{Rm}(X,\nu)\nu,X\rangle.$$
Since the ambient Lorentzian manifold $L^{n+1}$ has nonnegative sectional curvature,  the nonnegativity constraints of
$\mathcal {D}_2( \mathcal{S},X)$ holds naturally.

We also have
$$
Ric(X,\,X)-\mS(X,\,X)
 = \overline{Ric}(X,\,X) +X^iA_{i\ell}A_{\ell j}X^j+
 \langle\overline{Rm}(X,\nu)\nu,X\rangle\geq 0.
$$
This completes the proof of Theorem \ref{thmmcf}.
\end{proof}
Second, let $M$ be a real $n$($=2m$) dimensional Fano manifold with K\"{a}hler form $\omega_0$ associated to the  K\"{a}hler metric   $g_0$. We consider the twisted K\"{a}hler-Ricci flow (see \cite{cabor,liu,zhangzhang} and the references therein)
\be
\lf\{\bag\label{tkrf}
\frac{\de }{\de t}g_{i\ov{j}}(x,t)=&-R_{i\ov{j}}(x,t)+\theta_{i\ov{j}}(x)+g_{i\ov{j}}(x,t),\\
g_{i\ov{j}}(x,0)=&(g_0)_{i\ov{j}}(x ),
\eag\rt.
\ee
where $\theta $ is a closed  semi-positive $(1,1)$ form and
$$
[2\pi c_1(M)]=[\omega(x,t)+\theta].
$$
Here $\omega(x,t)=\mn g_{i\ov{j}}(x,t)\md z^i\wedge\md \ov{z^j}$ is the K\"{a}hler form of $g(x,t)$. We have
\begin{thm}\label{thmtkrf}
Let $M$ be a real $n$($=2m$) dimensional Fano manifold with K\"{a}hler form $\omega_0$ whose K\"{a}hler metric is denoted by $g_0$.
Then for the twisted K\"{a}hler-Ricci flow (\ref{tkrf}) with the assumption above,  there exists a positive constant $\kappa>0$ depending only on the initial metric $g_0$ such that
$$
\mathrm{Vol}_{  g(t )}\bigg(B\lf(x,  r\rt)\bigg)\leq \kappa r^n, \quad\forall\;  (x,t)\in M\times (0,\,+\infty).
$$
\end{thm}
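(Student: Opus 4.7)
The plan is to realize the twisted K\"ahler-Ricci flow (\ref{tkrf}) as an instance of the general geometric flow (\ref{grf}) and then invoke Theorem \ref{nonflated}. Reading off the real part of the flow, one has $\frac{\partial}{\partial t} g_{ij} = -2\mathcal{S}_{ij}$ with $\mathcal{S}_{ij} = R_{ij} - \theta_{ij} - g_{ij}$. The two standing hypotheses are immediate here: $\mathrm{Ric} - \mathcal{S} = \theta + g$ is nonnegative because $\theta$ is semi-positive and $g$ is positive-definite, and the nonnegativity of $\mathcal{D}_{2}(\mathcal{S},\cdot)$ (from (\ref{con})) reduces to a Bianchi-type computation that uses $d\theta = 0$ together with the K\"ahler identities.

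Next I would record the standing properties of (\ref{tkrf}) that are specific to the Fano/twisted setting and that furnish uniform ingredients for the estimates. Since $[\omega(t)] = 2\pi c_{1}(M) - [\theta]$ is constant, the total volume $\mathrm{Vol}_{g(t)}(M) = V_{0}$ is preserved; the flow exists for all $t \in [0, \infty)$ by the classical theory; the first eigenvalue $\lambda_{0}(g_{0}) > 0$ follows from the Fano hypothesis; and, most importantly, a Perelman-type argument adapted to the twisted flow yields a uniform two-sided bound $|S(x,t)| \leq C_{0}$ with $C_{0}$ depending only on $g_{0}$ (and $\theta$). Together, these give uniform control on the constants $A, B$ appearing in Theorem \ref{thmsob} and the constant $\alpha$ that will enter Theorem \ref{nonflated}.

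With the hypotheses verified I would apply Theorem \ref{nonflated} to each $(x_{0}, t_{0})$ to obtain $\kappa$-non-inflation at scale $\sqrt{t_{0}}$. To upgrade from the fixed scale $\sqrt{t_{0}}$ to arbitrary $r > 0$, split into two regimes. For $r$ larger than a chosen threshold $r_{0}$ (say $r_{0}$ comparable to $\mathrm{diam}_{g_{0}}(M)$), the trivial bound $\mathrm{Vol}_{g(t)}(B(x,r)) \leq V_{0} \leq (V_{0}/r_{0}^{n}) r^{n}$ already gives the claim. For $r < r_{0}$, introduce the parabolically rescaled flow $\bar{g}(\bar{t}) = r^{-2} g(t_{0} + r^{2}\bar{t})$, $\bar{t} \in [-1, 0]$; its $\bar{\mathcal{S}}$ still satisfies $\mathrm{Ric}(\bar{g}) - \bar{\mathcal{S}} = \theta + r^{2}\bar{g} \geq 0$, and the uniform bound $|S| \leq C_{0}$ translates into $\bar{S}(x, \bar{t}) \leq r^{2} C_{0}$, which is dominated by $\alpha / (0 - \bar{t})$ for a uniform $\alpha = \alpha(g_{0})$ throughout the unit parabolic cube. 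Applying Theorem \ref{nonflated} to $\bar{g}$ at $(x_{0}, 0)$ at unit scale yields $\mathrm{Vol}_{\bar{g}(0)}(B(x_{0}, 1)) \leq \bar{\kappa}$, and unraveling the rescaling gives $\mathrm{Vol}_{g(t_{0})}(B(x_{0}, r)) \leq \bar{\kappa}\, r^{n}$ with $\bar{\kappa}$ depending only on $g_{0}$.

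The main obstacle is the parabolic rescaling step: the rescaled flow is no longer of twisted K\"ahler-Ricci form in the normalized sense, so one must check that the hypotheses of Theorem \ref{nonflated} (and of the $\mathcal{W}$-entropy monotonicity underlying it) are preserved uniformly in $r$ and $t_{0}$, and that the Perelman-type scalar-curvature bound for (\ref{tkrf}) indeed holds along the whole flow with a constant depending only on the initial data. Pinning down this uniform dependence is exactly where the fact that $\kappa$ in the conclusion can be taken to depend only on $g_{0}$ is settled.
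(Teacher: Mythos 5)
Your plan to reduce to Theorem \ref{nonflated} is the right idea, but the way you set up the reduction has a genuine gap at the very first step: you read off $\mathcal{S}_{ij} = R_{ij} - \theta_{ij} - g_{ij}$ directly from the normalized flow (\ref{tkrf}) and assert that $\mathcal{D}_{2}(\mathcal{S},\cdot)\ge 0$ ``reduces to a Bianchi-type computation.'' It does not. The $+g$ term in the normalization contributes additional terms to $\mathcal{D}_{2}$ which are not sign-definite: already in the untwisted case, if $\mathcal{S}_{ij}=R_{ij}-\lambda g_{ij}$ then a direct computation gives $\mathcal{D}_{2}(\mathcal{S},0)=2\lambda S$, which is negative wherever $S<0$. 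So the standing hypothesis of Theorem \ref{nonflated} (and of the $\mathcal{W}$-entropy monotonicity in Lemma \ref{dandiao}) is simply not satisfied by the normalized twisted K\"ahler--Ricci flow with this choice of $\mathcal{S}$. This is exactly why the paper does \emph{not} apply the general theorems to (\ref{tkrf}) directly.

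What the paper does instead is apply the exponential time reparametrization (\ref{sag}), $t=-\ln(1-2s)$, $g(t)=\frac{1}{1-2s}\tilde{g}(s)$, $s\in[0,\tfrac{1}{2})$, which converts (\ref{tkrf}) into an \emph{unnormalized} flow $\partial_{s}\tilde{g}=-2\tilde{\mathcal{S}}$ with $\tilde{\mathcal{S}}_{i\bar{j}}=R_{i\bar{j}}-\theta_{i\bar{j}}$ (no $-g$ term). For this $\tilde{\mathcal{S}}$ the Bianchi identity and closedness of $\theta$ give the clean identity $\mathcal{D}_{2}(\tilde{\mathcal{S}},X)=4\sum\theta_{i\bar{j}}X^{i}\overline{X^{j}}\ge 0$, and $\mathrm{Ric}-\tilde{\mathcal{S}}=\theta\ge 0$. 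Moreover, because the reparametrized flow lives on the bounded interval $s\in[0,\tfrac{1}{2})$, the constant $\kappa$ from (\ref{kappadef}) is automatically uniform in $s_{0}$; this replaces your parabolic-rescaling step, which you yourself flag as the ``main obstacle'' (and which would face the same $\mathcal{D}_{2}$ problem, since the rescaled flow still carries a $+r^{2}\bar{g}$ term). The scalar-curvature hypothesis in Theorem \ref{nonflated} is verified via the Collins--Sz\'ekelyhidi/Liu upper bound $\sum g^{\bar{j}i}(R_{i\bar{j}}-\theta_{i\bar{j}})\le\alpha$, which translates to $\tilde{S}(x,s)\le\alpha/(\tfrac{1}{2}-s)$, not the two-sided uniform bound $|S|\le C_{0}$ you invoke. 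Finally, the extension to all $r>0$ uses the uniform diameter bound of \cite{cabor,liu}, which matches the ``large $r$'' half of your argument. In summary: your conclusion and final trivial-bound step are fine, but you need the change of time variable (\ref{sag}) rather than a direct verification of the hypotheses for the normalized flow, and the $\mathcal{D}_{2}\ge 0$ claim must be made for $\tilde{\mathcal{S}}=R-\theta$, not for $R-\theta-g$.
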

\begin{rem}
In the case of  K\"{a}hler-Ricci flow ($\theta_{i\ov{j}}\equiv 0$), the conclusion in Theorem \ref{thmtkrf} is the one in Zhang \cite{zhangqi3}(see also \cite{chenwang2}).
\end{rem}
\begin{rem}
From the scaling transformation (\ref{sag}), it is not difficult to know that Theorem \ref{thmlogsob1},
Theorem \ref{thmlogsob3}, Theorem \ref{thmsob}, Lemma \ref{thmlogsob2}, Corollary \ref{corlogsob}, Theorem \ref{thmnoncollapsing} and Theorem \ref{nonflated} also hold for twisted K\"{a}hler-Ricci flow (\ref{tkrf}).
\end{rem}
To avoid confusions, we give some preliminaries about K\"{a}hler geometry for special use in this paper.
Let $(M,\n,g)$ be real $n$-dimensional ($n=2m$) K\"{a}hler manifold, $\n$ be the Levi-Civita connection (also Chern connection) and $g$ be Riemannian metric which determines a unique K\"{a}hler metric and vise versa. So we can consider $g$ itself as the K\"{a}hler metric.
Assume that
$$
z=(z^1,\cdots,z^m)
$$
is the local coordinate system on $M$.
The K\"{a}her form is
$$
\omega=\mn \sum\limits_{i,j=1}^mg_{i\ov{j}}\md z^i\wedge\md \ov{z^j},
$$
where $g_{i\ov{j}}=g(\de_{z^i},\,\de_{\ov{z^j}})$.

Let $\theta$ be a real $(1,1)$-form. Then we have
$$
\ov{\theta_{i\ov{j}}}=\theta_{j\ov{i}},\;\tr_{g}\theta=2\sum\limits_{i,j=1}^mg^{\ov{j}i}\theta_{i\ov{j}},\;|\theta|_g^2=2\sum\limits_{i,j=1}^m g^{\ov{j}i}g^{\ov{q}p}\theta_{i\ov{q}}\theta_{p\ov{j}},
$$
where $\sum\limits_{ j=1}^mg^{\ov{j}i}g_{k\ov{j}}=\delta_{k}^i$.

If $\theta$ is also closed, then we have
$$
\frac{\theta_{i\ov{j}}}{\de z^k}=\frac{\theta_{k\ov{j}}}{\de z^i},\quad \frac{\theta_{i\ov{j}}}{\de \ov{z^{\ell}}}=\frac{\theta_{i\ov{\ell}}}{\de \ov{z^j}},
$$
which is equivalent to
$$
\n_{k}\theta_{i\ov{j}}=\n_{i}\theta_{k\ov{j}},\quad \n_{\ov{\ell}}\theta_{i\ov{j}}=\n_{\ov{j}}\theta_{i\ov{\ell}}.
$$
For any $f\in C^{\infty}(M,\R)$, we have
$$
\Delta_g f=2\sum\limits_{i,j=1}^mg^{\ov{j}i}\frac{\de^2 f}{\de z^i\de\ov{z^j}}.
$$
\begin{proof}[Proof of Theorem \ref{thmtkrf}]
For twisted K\"{a}hler-Ricci flow (\ref{tkrf}), define
$$
\mS_{i\ov{j}}(x,t)=R_{i\ov{j}}(x,t)-\theta_{i\ov{j}}(x).
$$
By making use of scaling
\be\label{sag}
t=-\ln(1-2s),\quad g_{i\ov{j}}(x,t)=\frac{1}{1-2s}\ti g_{i\ov{j}}(x,s),\quad s\in[0,\frac{1}{2}),
\ee
we know that $\ti g_{i\ov{j}}(x,s)$ satisfies the geometric flow equation
$$
\frac{\de }{\de s}\ti g_{i\ov{j}}(x,s)=-2\ti\mS_{i\ov{j}}(x,s),
$$
where $\ti g_{i\ov{j}}(x,0)=(g_0)_{i\ov{j}}(x)$ and
$$
\ti\mS_{i\ov{j}}(x,s)=R_{i\ov{j}}(x,-\ln(1-2s))-\theta_{i\ov{j}}(x).
$$
Then we can get
\ba\label{ti1}
\frac{\de \ti \mS}{\de s}-\Delta_{\ti g} \ti S-2|\ti\mS|_{\ti g}^2=0,
\ea
where $\ti S=\tr_{\ti g} \ti\mS$.

For any real-value vector $X\in \X$, it can be written as
$$
X=\sum_{i=1}^mX^i\de_{z^i}+\sum_{i=1}^m\ov{X^i\de_{z^i}}.
$$
Since $\theta$ is a real closed $(1,1)$-form, we have
\ba\label{ti2}
2\sum\limits_{i,j=1}^m\ti\nabla^i\theta_{i\ov{j}}\ov{X^j}
&=&2\sum\limits_{q,i,j=1}^m\ti g^{\ov{q}i}\ti\n_{\ov{q}}\theta_{i\ov{j}}\ov{X^j}\no\\
&=&2\sum\limits_{q,i,j=1}^m\ti g^{\ov{q}i}\ti\n_{\ov{j}}\theta_{i\ov{q}}\ov{X^j}\no\\
&=&\sum\limits_{ j=1}^m\lf(\ti\n_{\ov{j}}\tr_{\ti g}\theta\rt)\ov{X^j}.
\ea
From the second Bianchi identity, we also get
\be\label{ti3}
2\sum\limits_{i,j=1}^m\ti\nabla^i\ti R_{i\ov{j}}\ov{X^j}=\sum\limits_{ j=1}^m\lf(\ti\n_{\ov{j}} \ti R\rt)\ov{X^j}.
\ee
Since $\theta$ is semi-positive, from (\ref{ti1}), (\ref{ti2}) and (\ref{ti3}), we have
$$
\mathcal {D}_2(\ti \mS,X)=4\sum_{i,j=1}^m\theta_{i\ov{j}}X^i\ov{X^j}\geq 0.
$$
Collins and Sz\'{e}kelyhidi \cite{cabor} and Liu \cite{liu} proved that there exists a constant $\al>0$ such that
$$
\sum\limits_{i,j=1}^mg^{\ov{j}i}\lf(R_{i\ov{j}}(x,t)-\theta_{i\ov{j}}(x)\rt)\leq \al.
$$
Therefore, we have
$$
\ti S(x,s)\leq\frac{\al}{\frac{1}{2}- s},\quad s\in[0,\,\frac{1}{2}).
$$
Choose $s_0\in(0,\,\frac{1}{2})$ and $\ti r\in[0,\, \sqrt{s_0})$. Then for $s\in[s_0-\ti r^2,\,s_0]$ and $x\in M$, we have
$$
\ti S(x,s)\leq\frac{\al}{s_0- s}.
$$
By Theorem \ref{nonflated}, we have
\ba\label{mtnonflated}
\mathrm{Vol}_{\ti g(s_0)}(B(x,\ti r))\leq \kappa \ti r^n.
\ea
From (\ref{sag}), we know that
$$
\dist(x,y,\ti g(s))=\ti r
$$
implies
$$
\dist(x,y,  g(t))= r
$$
where
$$
t=-\ln(1-2s ),\quad r=\frac{\ti r}{\sqrt{1-2s}}.
$$
Therefore, from (\ref{mtnonflated}), we have
$$\label{nonflatedtgrf}
\mathrm{Vol}_{  g(t_0)}\lf[B\lf(x,  \frac{\ti r}{\sqrt{1-2s_0}}\rt)\rt]\leq \kappa \lf(\frac{\ti r}{\sqrt{1-2s_0}}\rt)^n,
$$
that is, at any point $(x,t)\in M\times (0,\,+\infty)$, for the twisted K\"{a}hler-Ricci flow (\ref{tkrf}), we have
\ba\label{nonflatedtgrf}
\mathrm{Vol}_{  g(t )}\lf[B\lf(x,  r\rt)\rt]\leq \kappa r^n,
\ea
where
$$
r\in\lf(0,\,\sqrt{\frac{e^t-1}{2}}\rt).
$$
Since Collins and Sz\'{e}kelyhidi \cite{cabor} and Liu \cite{liu} proved that the diameter of $(M,\,g(t))$ is uniformly bounded, the above  estimate (\ref{nonflatedtgrf}) holds for all $r>0$ with maybe a different constant $\kappa$.
\end{proof}
\noindent{\bf Acknowledgements}
This work was carried out while the authors were visiting Mathematics Department of Northwestern University. We would like to thank Professor Valentino Tosatti and Professor Ben Weinkove for hospitality and helpful discussions. The authors are also grateful to the anonymous referees and the editor for their careful reading and helpful suggestions which greatly improved the paper.

\begin{flushleft}
{\small Shouwen Fang\\
School of Mathematical Science, Yangzhou University,
Yangzhou, Jiangsu 225002, P. R. China\\
E-mail: shwfang@163.com\\
Tao Zheng\\
School of Mathematics and Statistics, Beijing Institute of Technology,
 Beijing 100081, P. R. China\\
E-mail: zhengtao08@amss.ac.cn}
\end{flushleft}

\end{document}